\newcommand{\bm}[1]{\boldsymbol{#1}}
\newcommand{\E}{\mathbb{E}}
\renewcommand{\P}{\mathbb{P}}
\newcommand{\Y}{\bm{Y}}
\newcommand{\mE}{\mathcal{E}}
\newcommand{\mB}{\mathcal{B}}
\newcommand{\wt}[1]{\widetilde{#1}}
\newcommand{\F}{\mathcal{F}}
\newcommand{\Var}{\mathsf{Var\,}}
\newcommand{\1}{\mathbb{I}}
\newcommand{\expect}[1]{\langle\mbox{$ #1$}\rangle}
\newcommand{\floor}[1]{\lfloor{#1}\rfloor}
\newcommand{\st}[1]{{\color{blue}#1}}
\def\B{\bm{B}}
\def\b{\bm{\beta}}
\def\Y{\bm{Y}}
\def\x{\bm{x}}
\def\mV{\mathcal{V}}
\def\b{\bm{\beta}}
\newcommand{\norm}[1]{\left\Vert#1\right\Vert}
\newcommand{\abs}[1]{\left\vert#1\right\vert}
\newcommand{\prob}{\mathbb{P}}
\newcommand{\p}{\mathbf{p}}
\newcommand{\N}{\mathbb{N}}
\newcommand{\R}{\mathbb{R}}
\newcommand{\mH}{\mathcal{H}}
\newcommand{\mN}{\mathcal{N}}
\newcommand{\mO}{\mathcal{O}}
\newtheorem{definition}{Definition}[section]
\newtheorem{lemma}{Lemma}[section]
\newtheorem{theorem}{Theorem}[section]
\newtheorem{remark}{Remark}[section]
 \theoremstyle{assumption}
\begin{document}

\twocolumn[

\aistatstitle{Uncertainty Quantification for Sparse Deep Learning}

\aistatsauthor{  Yuexi Wang and Veronika Ro\v{c}kov\'{a} }

\aistatsaddress{  Booth School of Business, University of Chicago } ]

 \begin{abstract}
Deep learning methods continue to have a decided impact on machine
learning, both in theory and in practice.
Statistical theoretical developments have been mostly concerned with
approximability or rates of estimation when recovering infinite
dimensional objects (curves or densities).
Despite the impressive array of available theoretical results, the
literature has been largely silent about {\em uncertainty quantification}
for deep learning.
This paper takes a step forward in this important direction by  taking a
Bayesian point of view.
We study Gaussian approximability of certain aspects of posterior
distributions of sparse deep ReLU architectures in non-parametric
regression.
Building on tools from Bayesian non-parametrics, we provide
semi-parametric Bernstein-von Mises theorems for linear and quadratic
functionals, which guarantee  that   implied
Bayesian credible regions have valid frequentist coverage.   Our results
provide new theoretical justifications for (Bayesian) deep learning with
ReLU activation functions, highlighting their {\em inferential potential}.

 \end{abstract}

\section{Introduction}
Neural networks have emerged as one of the most powerful prediction systems. Their empirical success has been amply documented
 in many applications including image classification \citep{krizhevsky2012imagenet}, speech recognition \citep{hinton2012deep} or game intelligence \citep{silver2016mastering}.
Beyond  algorithmic developments, there has been a rapid progress in theoretical understanding of deep learning \citep{anthony2009neural}. The majority of existing {\em statistical} theory has been concerned with  {\em prediction} aspects, e.g. approximability \citep{telgarsky2016benefits, yarotsky2017error, vitushkin1964proof} or
rates of convergence (either from a frequentist point of view \citep{mhaskar2017and, poggio2017and, schmidt2017nonparametric} or a Bayesian point of view  \citep{polson2018posterior}). 
A distinguishing feature of statistics, that goes beyond mere construction of prediction maps, is providing uncertainty quantification (UQ) for inference (hypothesis testing and confidence assessments). 
 The statistical approach to uncertainty quantification uses observations to construct a random subset (confidence set) which contains the truth with large probability. 
While computational   methods such as Boostrapped DQN \citep{osband2016deep} and Deep Ensembles \citep{lakshminarayanan2017simple} have been proposed to quantify predictive uncertainty, 
theoretically justifiable developments on UQ for deep learning are more rare.


A structured approach to the problem of uncertainty assessment lies in Bayesian hierarchical modeling. 
The Bayesian paradigm for deep learning places a probabilistic blanket over architectures/parameters and allows for  uncertainty quantification via posterior distributions \citep{neal1993bayesian}. 
While exact Bayesian inference is computationally intractable, many approximate methods have been developed including MCMC  \citep{neal2012bayesian}, Variational Bayes \citep{ullrich2017soft}, Bayes by Backprop \citep{blundell2015weight}, Scalable Data Augmentation \citep{wang2019scalable}, Monte Carlo Dropout \citep{gal2016dropout}, Hamiltonian methods \citep{springenberg2016bayesian}. 
The  Bayesian inference is fundamentally justified by the Bernstein-von Mises  (BvM) theorem.
The BvM phenomenon occurs when, as the number of observations increases, the posterior distribution is approximately Gaussian, centered at an efficient estimator of the parameter of interest. Moreover, the posterior credible sets, i.e. regions with prescribed posterior probability, are then also confidence regions with the same asymptotic coverage.  While the BvM limit is not unexpected in regular parametric models, infinite-dimensional notions of BvM are far from obvious 
(see e.g. \citet{castillo2013nonparametric}).

Our paper deals with uncertainty quantification. Our approach is inherently Bayesian and, as such, is conceptually epistemic where uncertainty about the unknown state of nature is expressed through priors and coherently updated with the data. The frequentist notion of uncertainty is primarily aleatoric as it reflects variability in possible realizations of an event that is largely stochastic in nature and is irreducible. The premise of the BvM phenomenon is that these two uncertainties, while qualitatively very different, are not mutually exclusive in the sense that their quantifications can agree. Priors that are not subjective and more automatic do not necessarily adhere to epistemic interpretation and can yield aleatoric measures of quantification. Our work sheds light on the fact that frequentist calibration is an attainable goal of Bayesian statistical procedures, where the BvM phenomenon facilitates communication of uncertainty using the more universally understood frequentist concept \citep{dawid1982well}.  

In this note, we study the {\em semi-parametric} BvM phenomenon concerning the limiting  behavior of the posterior distribution of certain low-dimensional summaries of a regression function.
In particular, we assume a non-parametric regression model with fixed covariates and sparse deep ReLU network priors, which have been recently shown to attain the optimal speed of posterior contraction \citep{polson2018posterior}. Building on
 \citet{castillo2015bernstein}, who laid down the general framework for semi-parametric BvMs, and  on \citet{polson2018posterior}, we formulate asymptotic normality for linear and quadratic functionals. Related semi-parametric BvM  results have been established for density estimation \citep{rivoirard2012bernstein},  Gaussian process priors \citep{castillo2012semiparametric, castillo2012semiparametric2}, covariance matrix \citep{gao2016bernstein} and tree/forest priors \citep{rockova2019bvm}.  
Our results  provide new frequentist theoretical justifications for Bayesian deep learning inference with certain aspects of a regression function.

{

Our analysis focuses on sparse deep ReLU networks. Deep networks have been shown to outperform shallow ones in  terms of representation power \citep{telgarsky2016benefits}, model complexity \citep{mhaskar2017and} and  generalization \citep{kawaguchi2017generalization}.  The ReLU squashing function has been generally preferred due to its expressibility and inherent sparsity. For instance, \citet{yarotsky2017error} provides error bounds for approximating polynomials and smooth functions with deep ReLU networks.  \citet{schmidt2017nonparametric} showed that deep sparse ReLU  networks can yield rate-optimal reconstructions of smooth functions and their compositions. Sparse architectures (in addition to ReLU) can  reduce the test error. For example, sparsification can be achieved with dropout \citep{srivastava2014dropout} which averages over sparse structures by randomly removing nodes and, thereby, alleviates overfitting.  More recently,
\citet{polson2018posterior} proposed  Spike-and-Slab Deep Learning (SS-DL) as a fully Bayesian variant of dropout. Their framework  provably does not overfit and achieves an {\em adaptive} near-minimax-rate optimal posterior concentration. 
\citet{liu2019variable} studies the BvM phenomena for the gradient function of Bayesian deep ReLU network and proposes a variable selection method based on the credible intervals. We continue the theoretical investigation of SS-DL in this paper.
}

Similar to \citet{rockova2019bvm}, we consider a non-parametric regression model where responses $\Y^{(n)}=(Y_1, \ldots, Y_n)'$ are linked to fixed covariates  $\x_i=(x_{i1},\ldots, x_{ip})'\in [0,1]^p$ for $i=1,\ldots, n$  as follows
\begin{equation}\label{eq:dgp}
Y_i=f_0(\x_i)+\epsilon_i, \, \epsilon_i \overset{iid}{\sim} N(0,1),
\end{equation}
where  $f_0 \in \mH^\alpha_p$ is an $\alpha$-H\"{o}lder smooth function on a unit cube  $[0,1]^p$ for some $\alpha>0$. The true generative model implied by (\ref{eq:dgp}) will be denoted by  $\prob^n_0$. We want to reconstruct $f_0$ with $f\in\F$, where the model class $\F$ is assigned a prior distribution  $\Pi$. Our goal is to study the asymptotic behavior of the posterior distribution 
\[
\Pi\left[\sqrt n (\Psi(f)- \hat \Psi)\,|\,\Y^{(n)}\right],
\]
where $\Psi : \F \to \R$  is a measurable function of interest and where $\hat \Psi$ is a random centering point (see Theorem 2.1 in \citet{castillo2015bernstein}). 

Two functionals are considered in our work. The first one is the linear functional 
\begin{equation}\label{eq:Psi}
\Psi(f)=\frac{1}{n}\sum_{i=1}^n a(\x_i) f(\x_i), 
\end{equation}
with a   constant weighting functions $a(\cdot)$. We discuss potential generalizations to the non-constant case later in Section \ref{sec:discussion} . The second functional of interest is the squared-$L^2$ norm 
\begin{equation}\label{eq:Psi_L2}
\Psi(f)=\norm{f}_L^2,\quad 
\end{equation}
where  $\norm{f}_L^2=\frac{1}{n}\sum_{i=1}^n [f(\x_i)]^2$. Note that $\norm{\cdot}_L$ corresponds to the LAN (locally asymptotically normal) norm, which is equivalent to the empirical $L^2$-norm $\norm{\cdot}_n$ in our model.  There is extensive literature on minimax estimation of linear and quadratic functionals, initiated in \citet{ibragimov1985nonparametric} and followed by \citet{cai2005adaptive, efromovich1996optimal, collier2017minimax}, to name a few. While the linear functional is useful for inference about the average  regression surface, the quadratic functional is useful in many testing problems, including construction of confidence balls \citep{cai2006adaptive} and goodness of fit tests \citep{dumbgen1998new, butucea2007goodness}. We study  adaptive estimation of the two functionals from a Bayesian perspective.

First, we give the definition of asymptotic normality. 
\begin{definition}
Denote with  $\beta$ the bounded Lipschitz metric for weak convergence and  with $\tau_n$ the mapping $\tau_n: f \to \sqrt n (\Psi(f)-\Psi_n)$.
We say that the posterior distribution of the functional $\Psi(f)$ is asymptotically normal with centering $\Psi_n$ and variance $V$ if 
\begin{equation}\label{eq:asy_normal}
\beta(\Pi[\cdot\mid \Y^{(n)}]\circ \tau_n^{-1}, \mN(0, V))\to 0,
\end{equation}
in $\P_0^n$-probability  as $n\to \infty$. We  will write  this more compactly as $\Pi[\cdot\mid \Y^{(n)}]\circ \tau_n^{-1} \rightsquigarrow \mN(0, V)$.
\end{definition}
Next, we say that the posterior distribution {\em satisfies the BvM theorem} if (\ref{eq:asy_normal}) holds with $\Psi_n=\hat\Psi+o_P(\frac{1}{\sqrt n})$ for $\hat \Psi$ a linear efficient estimator of $\Psi(f_0)$. 

\citet{castillo2015bernstein} provide general conditions on the model and on the function $\Psi(\cdot)$ to guarantee that the BvM phenomenon holds. Our results are built on the {first-order} approximation technique  developed in  their work. Essentially, we want to show that  the sparse deep learning posterior can approximate both $f_0$ and the linear expansion term well enough so that the remainder term vanishes when $n \to \infty$. 

The rest of our paper is organized as follows.  Section \ref{sec:deep_relu} defines sparse ReLU networks and reviews the posterior concentration results. Section \ref{sec:main_thm} contains the main results of BvM properties of  the two functionals and Section \ref{sec:adaptive} discusses extensions to adaptive priors. Section \ref{sec:discussion} concludes with a discussion.

\section{Deep ReLU Networks}\label{sec:deep_relu}
We follow the notation used in \citet{polson2018posterior}. We denote with $\F(L,\p,s)$  the class of sparse ReLU networks with $L\in \N$ layers,  a vector of $\p=(p_0, \ldots, p_{L+1})'\in\N^{L+2}$ hidden units and sparsity level $s\in \N$, which is the upper bound on the number of nonzero parameters. In our model, we have $p_0=p$ and $ p_{L+1}=1$. Each function $f_{\B}^{DL}(\x) \in \F(L,\p,s)$ takes the form
\begin{equation}\label{eq:DL}
f_{\B}^{DL}(\x) = W_{L+1}\sigma_{b_L}\left(W_L \sigma_{b_{L-1}} \cdots \sigma_{b_1}(W_1\x)\right)+b_{L+1}
\end{equation}
where $b_l\in \R^{p_l}$ are shift vectors and $W_l$ are  $p_l\times p_{l-1}$ weight matrices that link neurons between the $(l-1)^{th}$ and $l^{th}$ layers and $\sigma_{b}(\x)$ is the squashing function. 
Throughout this work, we assume the {\em rectified linear (ReLU)} function $\sigma_{b}(\x) = \max(\x+b,0)$ which applies to vectors elementwise. Note that the top layer shift parameter $b_{L+1}$ is {\em outside} the ReLU function since the top layer is only a linear function. We denote the sets of all model parameters with
\begin{equation}\label{eq:B}
\B=\{ (W_1, b_1),\ldots,(W_L, b_L), (W_{L+1}, b_{L+1}) \}.
\end{equation}

Let $Z_l \in \R^{p_l}$ represent the hidden nodes of the $l^{th}$ layer obtained as 
\begin{align*}
Z_{l}(\x)&=\sigma_{b_l}(W_l Z_{l-1}(\x)), \quad \text{ for }\quad l=1\ldots, L,\\
Z_0(\x)&=\x.
\end{align*}
We use $Z=\{Z_l\}_{l=1}^L$ to represent the collection of all hidden neurons. Their values are completely determined by $\{W_l, b_l \}_{l=1}^L$, independently of the top layer parameters $\{W_{L+1}, b_{L+1}\}$.

\subsection{Spike-and-Slab Priors}
We place a probabilistic structure on $\B$ that is slightly different from \citet{polson2018posterior}. In particular, we remove the spike-and-slab prior on the top layer $L$ to obtain a fully-connected top layer for each function $f_{\B}^{DL}(x)$. Such  a relaxation on the top layer facilitates  the \textit{change of measure} step in our results.  Later we show that having a  fully connected top layer  {\em does not} affect the network approximability and the posterior concentration rate. 

We convert $\B$ into a vector by stacking $\{W_l, b_l\}_{l=1}^{L+1}$  from the bottom to the top  and denote  $\B=(\beta_1, \ldots, \beta_T)' $, where $T=\sum_{l=0}^Lp_{l+1}(p_l+1)$ is the number of parameters in a fully connected network with $L$ layers and a vector of $\p$ neurons. Note that $\{\beta_j\}_{j>T-(p_L+1)}$ corresponds to the top layer $\{W_{L+1}, b_{L+1}\}$.   Then the priors on $\B$ are
\begin{equation}\label{eq:beta}
\pi(\beta_j\mid \gamma_j)=\gamma_j \tilde \pi(\beta_j)+(1-\gamma_j)\delta_0(\beta_j),
\end{equation}
with
\begin{equation}\label{eq:gamma}
\gamma_j=1 \quad \text{ for }\quad  j> T-(p_L+1),
\end{equation}
where $\tilde\pi(\beta)$ is specified as
\begin{align}\label{eq:beta_prior}
\tilde \pi(\beta_j) =\left\{\begin{array}{cc}
N(0,1), & j>T-{p_L+1},\\
\mathrm{Uniform}[-1,1], & j \leq T-{p_L+1},
\end{array} \right.
\end{align}
 i.e., the top layer weights follow standard normal distribution, while the deep weights follow uniform distribution on $[-1,1]$. 
 $\delta_0(\beta)$ is a dirac spike at zero, and $\gamma_j\in\{0,1\}$ for whether or not $\beta_j$ is nonzero. We let $\gamma_j=1$ for all $j> T-(p_L+1)$ so that the top layer is fully connected.  The vector $\gamma=(\gamma_1,\ldots, \gamma_T)'$ encodes the connectivity pattern below the top layer. We assume that, given the network structure and the sparsity level $s=\abs{\gamma}> p_L$, all architectures are  equally likely a priori, i.e.
\begin{equation}\label{eq:gamma_prior}
\pi(\gamma\mid \p,s)= \frac{\1( \gamma_j=1 \,\,\text{for}\,\, j > T-p_L-1)}{{{T-p_L-1} \choose {s-p_L-1}}}.
\end{equation}
We denote with $\mV^{\p,s}$ the set of all combinatorial possibilities of  connectivity patterns below the top layer. For a given sparsity level $s$,  we can write
\begin{equation}\label{eq:shell}
\F(L,\p,s) = \bigcup_{\gamma\in \mV^{\p,s}}\F(L,\p,\gamma),
\end{equation}
where each shell  $\F(L,\p,\gamma)$ consists of all uniformly bounded functions $f_{\B}^{DL}$ with the same connectivity pattern $\gamma$, i.e. $ \F(L,\p,\gamma)=\{f_{\B}^{DL}(\x) \in \F(L,\p,s): f_{\B}^{DL}(\x) \text{\,\, as in \eqref{eq:DL} with $\B$ arising from (\ref{eq:beta}) for a given}\\ \,\, \gamma \in\mV^{\p,s} \text{and where $\norm{f_{\B}^{DL}(\x)}_\infty<F$}\}$ for some $F>0$.

\begin{remark}
The prior for the deep coefficients $\beta_j$ in (\ref{eq:beta_prior}) can  be replaced by 
\begin{equation}\label{eq:beta_all_normal}
\tilde\pi (\beta_j)=N(0,1), \forall j=1, \ldots, T.
\end{equation}
The posterior concentration rate can be also shown to be rate-optimal under this prior. We give the  sketch  of the proof after Theorem \ref{thm:post_concentrate} in Supplemental Material. Moreover, the BvM property for this prior can be immediately concluded from our proofs of Theorems 3.1-3.3.

\end{remark}

\subsection{A Connection between Deep ReLUs and Trees}\label{sec:sum}
Before proceeding,  it will be useful to revisit a connection between networks and trees.
Recall that any deep ReLU network function can be written as a sum of local linear functions, i.e.
\begin{equation}\label{eq:local_linear}
f_{\B}^{DL}(\x) = \sum_{k=1}^K \1( \x \in \Omega_k) (\tilde \beta_k' \x+\tilde\alpha_k),
\end{equation}
where $\{\Omega_k\}_{k=1}^K$ is a  partition of the predictor space made by recursive ReLU layers (see \citet{polson2017deep} for illustrations).  Both the partition $\{\Omega_k\}_{k=1}^K$ and the coefficients of the local linear functions $\{\tilde\beta_k, \tilde\alpha_k\}_{k=1}^K$ are determined from $\{W_l,b_l\}_{l=1}^{L+1}$. We have omitted the dependence on $\B$ for simplicity of notation.

\citet{balestriero2018mad} view ReLU as Max-Affine Spline Functions (MASO) and describe how the local linear functions and partitions are determined from weights $\B$. They point out that the partition by layer $l$ contains up to $2^{p_l}$ convex conjoint regions. In practice, however, many of them could be empty intersections.  \citet{montufar2014number} shows that the number of linear regions $K$ of ReLU networks  is upper-bounded by $2^T$ and lower-bounded by $(\prod_{l=1}^{L-1}\floor{\frac{p_l}{p}}^{p})\sum_{j=1}^{p}{p_L\choose j}$. \citet{hanin2019complexity} further measure the volume of the boundaries between these regions.

Deep ReLU networks are similar to trees/forests methods in the sense that  they also partition the predictor space. 
In fact, any regression tree can be represented by a  neural network with a particular activation function, as we illustrate below using  an example from \citet{biau2016neural}.

\paragraph{Example 1} Define an activation function $\tau_{b}: \R \to \{-1, 1\}$ such that
\[
\tau_b(x)=2\1_{x+b\geq 0}-1.
\]
We can reconstruct a two-dimensional ($p=2$) example in Figure \ref{fig:tree} with a neural network as
\begin{align*}
Z_1&=\tau_{-b_1}(X_1) &&Z_2=\tau_{-b_2}(X_2),\\
Z_3&=\tau_{-2}(-Z_1+Z_2)&&Z_4=\tau_{-2}(Z_1+Z_2), \\
Z_5& =\tau_{-1}(Z_1)&& f_{\B}^{DL}(\x)= \sum_{i=3}^5 W_i Z_i.
\end{align*}
where $b_1$ and $b_2$ set the decision boundaries along $(X_1, X_2)$ axes in the tree, and $\{W_i\}_{i=3}^5$ are the jump sizes in each leaf node. A more detailed explanation of the choice of weights can be found in \citet{biau2016neural}. By analogy, the hierarchical segmentation is determined by the deep layers while the values of the leaf nodes are assigned by the top layer. 

\begin{figure}[!ht]
\begin{center}
\includegraphics[width=0.38\textwidth]{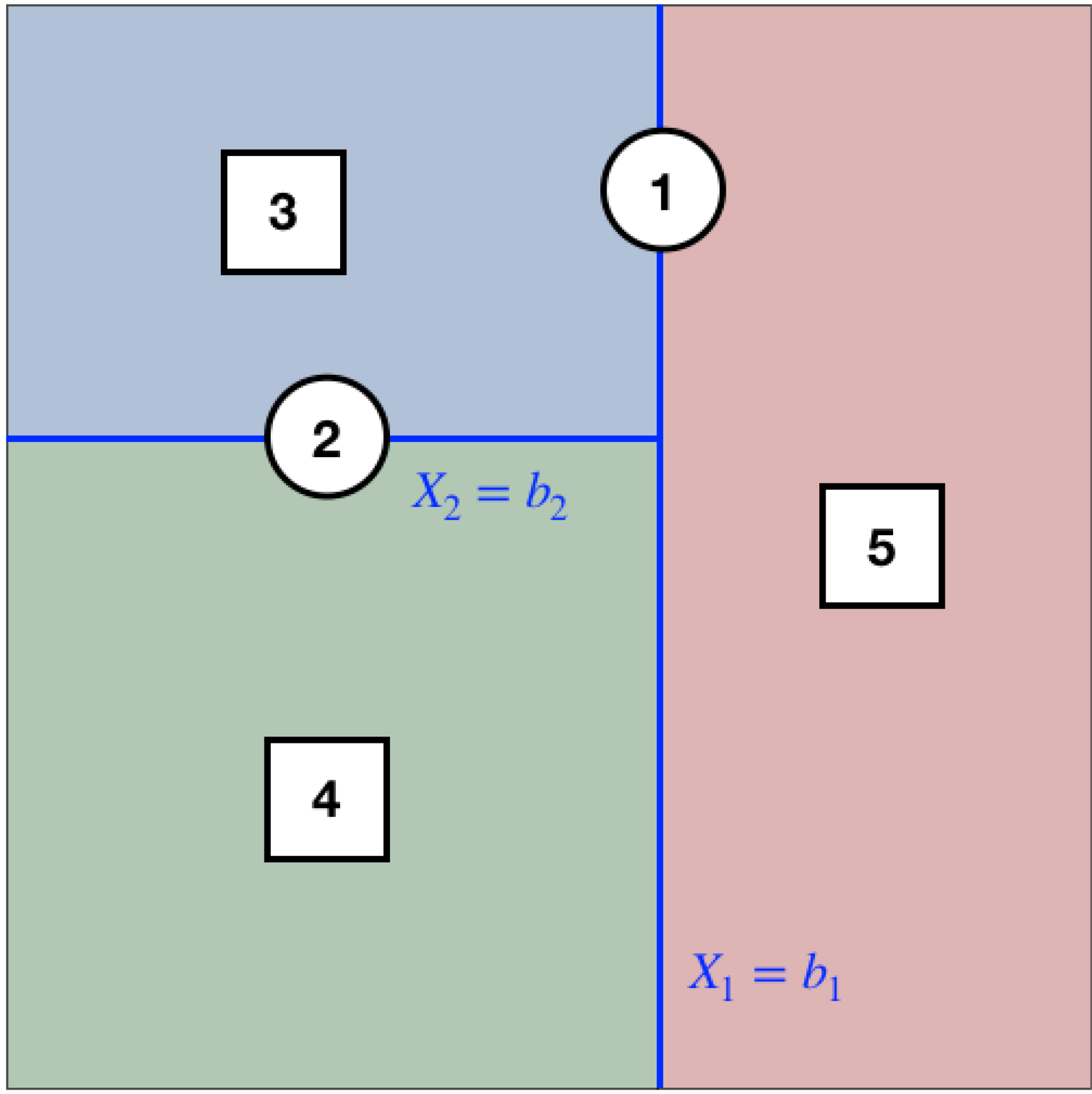}
\caption{Visualization of Example 1}\label{fig:tree}
\end{center}
\end{figure}

Deep ReLU networks use a different activation function and thereby place fewer restrictions on the geometry of the partition boundaries (shards as opposed to boxes). 
There are two aspects that make the analysis of deep ReLU networks more difficult. First, the partitioning lines do not align with coordinate axes when $W_l\neq 0$. Second, the partitioning cells $\{\Omega_k\}_{k=1}^K$ and the local linear coefficients $\{\tilde\beta_k, \tilde\alpha_k\}_{k=1}^K$ are related as they both depend on the unknown coefficients $\{W_l, b_l\}_{l=1}^L$. In tree models, on the other hand, they are independent parameters. 

To illustrate the correspondence between the partitions and local linear functions as well as  their relationship to $\B$, we consider the following toy example.

\paragraph{Example 2} Consider $L=1, p=2$ and $p_1=2$.  Given the weights and shifts as
\[
W_1=\left(\begin{array}{c}
W_1^1\\
W_2^1
\end{array}
\right), b_1=(b_1^1, b_2^1), W_2=\left(\begin{array}{c}
W_1^2\\
W_2^2
\end{array}
\right), b_2=b^2,
\]
we can write the model as
\begin{align*}
Z_1&=\sigma_{b^1_1}(W^1_1 \x), \quad Z_2=\sigma_{b^1_{2}}(W^1_{2} \x),  \\
 f_{\B}^{DL}(\x)&=\sigma_{b^2}(W^2_1Z_1+W^2_2Z_2).
\end{align*}
Then the corresponding $\{\tilde \beta_k,\tilde \alpha_k, \Omega_k\}_{k=1}^5$ for each local linear function can be organized as

\scalebox{0.8}{
\centering
\begin{tabular}{cccc}
\toprule
i & $\tilde \beta_i$ &$\tilde \alpha_i$ & $\Omega_i$ \\
\midrule
1 & $W^2_1W^1_1+W^2_2W^1_2$ & $W_1^2b^1_1+W_2^2b^1_2+b^2$ & $A_1\cap A_2 \cap A_3$\\
2 & $W^2_1W^1_1$ & $W^2_1b^1_1+b^2$ & $A_1\cap A_2^c \cap A_4$\\
3 & $W^2_2W^1_2$ & $W_2^2b^1_2+b^2 $ & $A_1^c\cap A_2 \cap A_5$ \\
4 & $0$ & $\max(b^2,0)$ &$A_1^c\cap A_2^c$ \\
5 & $0$ & $0$ & $( \Omega_1\cup\Omega_2\cup\Omega_3\cup\Omega_4)^c$\\
\bottomrule
\end{tabular}
}

with
\begin{align*}
A_1&=\{\x: W^1_1 \x+b^1_{1} >0\}, &&A_2 =\{\x: W^1_{2} \x+b^1_{2} >0\},\\
A_3&=\{\x:\tilde \beta_1 \x+\tilde\alpha_1>0\}, &&A_4=\{\x: \tilde\beta_2 \x+\tilde\alpha_2 >0\}, \\
 A_5& =\{\x:\tilde\beta_3 \x+\tilde\alpha_3>0\}.
\end{align*}
Here we use $A_i^c$ to denote the complement of set $A_i$, i.e., $A_i^c=\{\x \in\R^2: \x \notin A_i\}$. The covariance matrix of $\{\tilde\beta_k\}_{k=1}^3$ is
\[
\Var \left(\begin{array}{c}
\tilde\beta_1\\
\tilde\beta_2\\
\tilde\beta_3
\end{array}
\right) = \frac{2}{9}\left(\begin{array}{ccc}
2&1 & 1 \\
1& 1&0\\
1& 0 &1
\end{array}
\right).
\]
This example is plotted in Figure \ref{fig:toy}, where the boundaries of the partitions are nested according to $\{\tilde \beta_k, \tilde \alpha_k\}_{k=1}^5$ and  determined by $\{W_l,b_l\}_{l=1}^2$.
\begin{figure}[!ht]
\begin{center}
\includegraphics[width=0.5\textwidth]{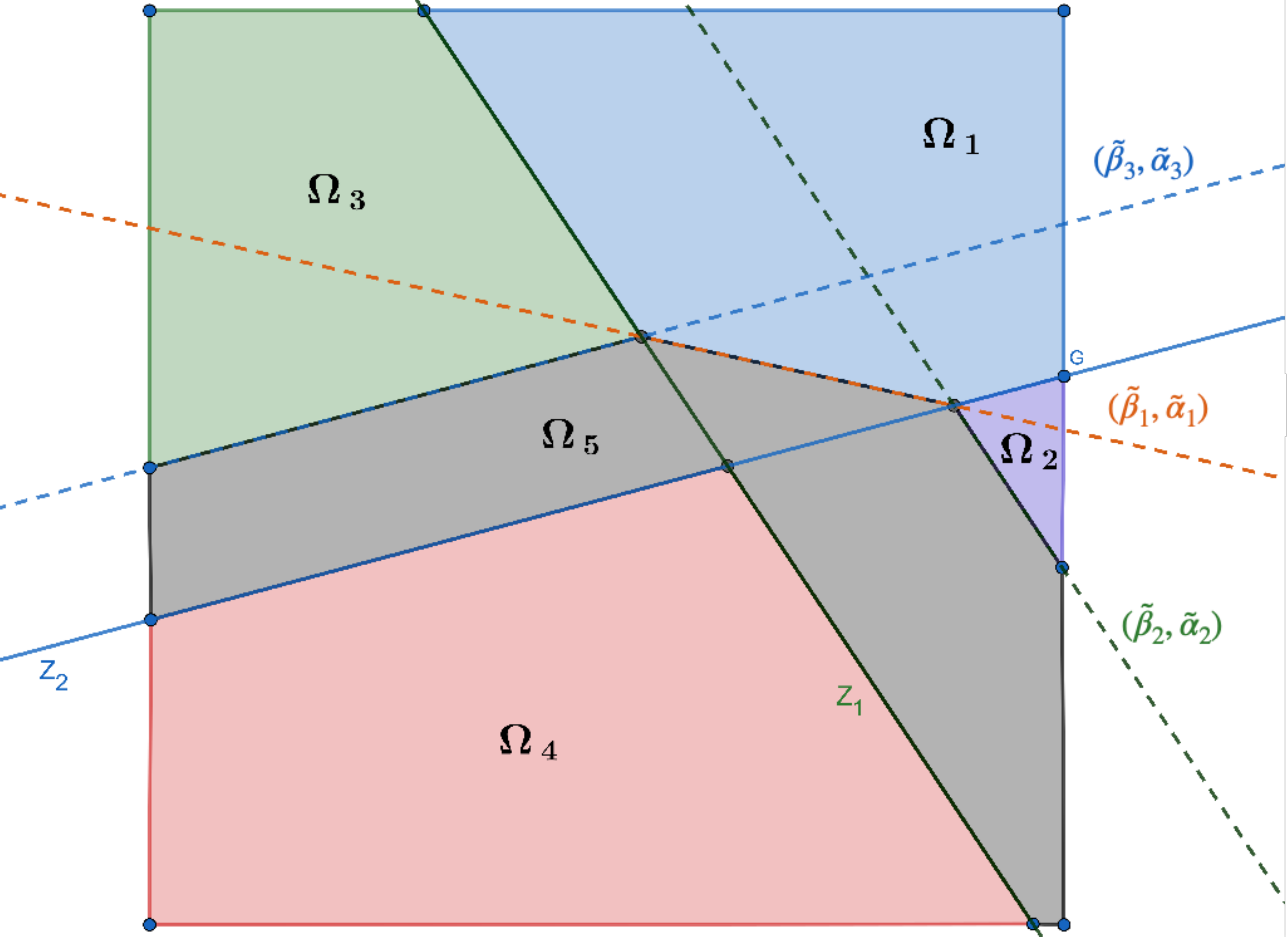}
\caption{Visualization of Example 2}\label{fig:toy}
\end{center}
\smallskip
\footnotesize
\end{figure}

\subsection{Posterior Concentration}\label{sec:new_prior}
One essential prerequisite for our BvM analysis is  optimal rate of posterior convergence.
\citet{polson2018posterior} (PR18) showed that  sparse deep ReLUs  attain the  near-minimax optimal rate and are {\em adaptive} to unknown smoothness under suitable priors on the architecture size. 
%
Here, we use a modified prior with a fully connected top linear layer (as given by  (\ref{eq:gamma})). The posterior concentration result  still holds.
Indeed, for an arbitrary sparse network, there exists at least one network  with a fully connected linear layer that achieves the same approximation error. The approximability of our class of networks is thus the same as the class considered in PR18. We illustrate how such a network can be constructed in the Supplemental Material (Lemma \ref{lem:approx_error}).

Denoting  $(L^*, N^*, s^*)$ as  in Theorem 5.1 of PR18 and choosing the parameters of the network as 
\begin{equation}\label{eq:An_cond}
 \left\{\begin{array}{l}
 L=L^* +1 \asymp \log(n),\\
 s=s^*+24 pN^* \lesssim n^{p/(2\alpha+p)}, \\
 \end{array}\right.
 \end{equation}
 we define
\begin{equation}\label{eq:An}
A_n^M=\{f_{\B}^{DL}\in  \F(L,\p,s): \norm{f_{\B}^{DL}-f_0}_L \leq M\xi_n \}
\end{equation}
with $\xi_n=n^{-\alpha/(2\alpha+p)}\log^\delta(n)$ for some $M>0$ and $\delta>0$.
As we formalize in Theorem \ref{thm:post_concentrate} in the Supplement, 
one can show $\Pi[A_n^{M_n}|\Y^{(n)}]=1+o_P(1)$ for any $M_n\to \infty$ and uniformly bounded $\alpha$-H\"{o}lder mappings $f_0$.

Our analyses in Section \ref{sec:main_thm} will be performed locally  on sets  $A_n^{M_n}$ where the posterior concentrates.

\section{Semi-parametric BvM's}\label{sec:main_thm}
Locally on the sets $A_n\equiv A_n^{M_n}$ we will perform expansions of the log-likelihood as well as the functional $\Psi$.
The log-likelihood is denoted with 
\[
\ell_n(f)=-\frac{n}{2}\log 2\pi-\sum_{i=1}^n \frac{[Y_i-f(\x_i)]^2}{2}.
\]
and the log-likelihood ratio $\Delta_\ell(f)=\ell(f)-\ell(f_0)$ can be expressed as a sum of a quadratic term and a stochastic term via the LAN expansion as follows
\[
\Delta_\ell(f) =-\frac{n}{2}\norm{f-f_0}^2_L+\sqrt n W_n(f-f_0)
\]
where
\begin{align*}
W_n (f-f_0)&=\expect{f-f_0, \sqrt n \boldsymbol{\epsilon}}_L\\
&=\frac{1}{n} \sum_{i=1}^n \sqrt n \epsilon_i [f_0(\x_i)-f(\x_i)].
\end{align*}
We focus on the first-order approximations of the functionals.
For any $f \in A_n$, we write
\[
\Psi(f)=\Psi(f_0)+\expect{\Psi_0^{(1)}, f-f_0}_L+r(f,f_0).
\]
 The first-order term $\Psi_0^{(1)}$ is equal to $a$ for linear functionals \eqref{eq:Psi} and $2f_0$ for the quadratic functional  \eqref{eq:Psi_L2}. The inner product $ \expect{\cdot,\cdot}_L$ is defined as
$
\expect{g,h}_L = \frac{1}{n} \sum_{i=1}^n g(\x_i)h(\x_i)
$
 for  two functions $g$ and $h$. 
 
Before we dive into the main development, we recall the  results in \citet{castillo2015bernstein} which will be leveraged in our analysis. 

There are  two sufficient conditions for obtaining weak asymptotic normality as defined in (\ref{eq:asy_normal}). The first one is the vanishing remainder 
\begin{equation}\label{eq:rn}
\sup_{f\in A_n} \abs{t\sqrt n r(f, f_0)}=o_P(1).
\end{equation}
The second one is verifying
\begin{equation}\label{eq:c_o_m}
\frac{\int_{A_n} e^{\ell_n(f_t)-\ell_n(f_0)}d\Pi(f)}{\int_{A_n} e^{\ell_n(f)-\ell_n(f_0)}d\Pi(f)}=1+o_P(1),  \forall t\in \R, 
\end{equation}
 where $ f_t=f-\frac{t\Psi_0^{(1)} }{\sqrt n}$.
 
The second condition in (\ref{eq:c_o_m}) can be shown with a \textit{change of measure} argument and it guarantees that the posterior has no extra bias term.
With these two conditions satisfied, the posterior behavior of  $\sqrt n (\Psi(f)-\hat\Psi)$ is asymptotically mean-zero normal with variance $V_0=\norm{\Psi_0^{(1)}}_L^2$, where
\[\hat\Psi = \Psi(f_0) +\frac{W_n(\Psi_0^{(1)})}{\sqrt n}\]
is a random centering point.

A crucial step   is  performing the \textit{change of measure}  in  (\ref{eq:c_o_m}), where  we replace $f$ with a shifted function $f_t$ in the integration.
This is complicated by the fact that the  shifted function $f_t$ does not necessarily have to correspond to a deep ReLU network from the class $\F(L,\p, s)$. In the analysis of trees, for instance, one can condition on the partition parameter and perform the shift of measure on functions supported on the {\em same} partition, where the shift only affects step heights. For a deep ReLU network, however, partitions $\Omega_k$ and local linear coefficients $(\wt b_k,\wt \alpha_k)$ in \eqref{eq:local_linear} are not independent as they  {\em both} depend on the deep weights $\{W_l, b_l\}_{l=1}^L$. It is thereby   not obvious how the shift affects the partitions and the network coefficients.
If we want to preserve the partitions of the predictor space, the only ``free" parameters left to play with are the top layer weights $\{W_{L+1}, b_{L+1}\}$. Similarly as for trees, we consider conditioning on the {\em deep} coefficients $\{W_l, b_l\}_{l=1}^L$, which is equivalent to conditioning on $\gamma$ and $ Z=\{Z_l\}_{l=1}^L$, and perform the change of measure only on the top layer. We write the function class conditionally on $(\gamma, Z)$ as 
\begin{multline}
\F(L,\p, \gamma, Z)= \{ f\in \F(L,\p,s):  f=W_{L+1}Z_L+b_{L+1} \\
\text{and $f$ has connectivity $\gamma$}\}.
\end{multline}
Since the prior of $\{W_l, b_l\}_{l=1}^L$ is continuous, there are  infinitely many $(\gamma, Z)$-dependent shells $\F(L,\p,\gamma, Z)$ inside $\F(L,\p, s)$. The general scheme of our proof is as follows. First,  for each shell $\F(L,\p, \gamma, Z)$, we have a local centering point $\hat \Psi_Z^\gamma$ and a local variance $V_Z^\gamma$. Moreover, the shifted function $f_t$ inside each shell lives on the {\em same partition} as $f$ and the change of measure can therefore be performed more easily. Second, we show that  $\hat \Psi_Z^\gamma$ and  $V_Z^\gamma$ converge {\em uniformly} to a global centering point $\hat \Psi$ and a global variance  $V_0$ for all $Z$ and $\gamma$ inside $A_n$. This implies that  we recover the global BvM on $\F(L,\p, s)$. The details of the local projections and the proof of all theorems are  in Supplemental Material.

\subsection{Linear Functionals}
To start,  we consider the linear functional  in  (\ref{eq:Psi}) where $a(\cdot)$ is a constant function in which case  $\Psi(f)$ can be viewed as a constant multiple of the average regression surface evaluated at $\{\x_i\}_{i=1}^n$. Let
\begin{align*}
&\Psi(f)=\Psi(f_0)+ \expect{a, f-f_0}_L, \quad \Psi_0^{(1)}=a, \\
&\hat \Psi=\Psi (f_0)+\frac{W_n(a)}{\sqrt n}, \quad V_0=\norm{a}_L^2.
\end{align*}

\begin{theorem}\label{thm:linear}
Assume the model (\ref{eq:dgp}),  where $f$  is endowed with a prior on $F(L, \p, s)$ defined in (\ref{eq:beta}), (\ref{eq:gamma}) and \eqref{eq:beta_prior}. Assume that  (\ref{eq:An_cond}) is satisfied and that $f_0 \in \mH_p^\alpha$, where $p=\mO(1)$ as $n\to \infty$, $\alpha<p$ and $\norm{f_0}_\infty\leq F$. When $a(\cdot)$ is  constant,  we have
\[\Pi(\sqrt n (\Psi(f)-\hat \Psi)\mid \Y^{(n)}) \rightsquigarrow N(0, \norm{a}_L^2) \]
in $\prob_0^n$-probability as $n\to\infty$.
\end{theorem}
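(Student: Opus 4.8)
The plan is to verify the two sufficient conditions of \citet{castillo2015bernstein}, namely the vanishing remainder \eqref{eq:rn} and the change-of-measure identity \eqref{eq:c_o_m}, specialized to $\Psi_0^{(1)}=a$, $V_0=\norm{a}_L^2$ and $\hat\Psi=\Psi(f_0)+W_n(a)/\sqrt n$. The first condition is immediate: a linear functional has the exact expansion $\Psi(f)=\Psi(f_0)+\expect{a,f-f_0}_L$, so $r(f,f_0)\equiv 0$ and $\sup_{f\in A_n}\abs{t\sqrt n\,r(f,f_0)}=0$. The entire argument therefore reduces to establishing \eqref{eq:c_o_m} for the shifted function $f_t=f-\frac{ta}{\sqrt n}$.

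The key structural observation is that, since $a$ is constant, the perturbation $\frac{ta}{\sqrt n}$ is itself a constant function and so lies exactly in the span of the top-layer intercept. Conditioning on the deep parameters $(\gamma,Z)$ fixes the partition $\{\Omega_k\}$ and the hidden map $Z_L$, and writing $f=W_{L+1}Z_L+b_{L+1}$ shows that passing to $f_t$ leaves every deep weight and every top weight $W_{L+1}$ untouched, translating only the intercept, $b_{L+1}\mapsto b_{L+1}-\frac{ta}{\sqrt n}$. This is exactly where the fully connected top layer is used: $b_{L+1}$ carries a continuous $N(0,1)$ prior, so $f_t$ remains in the same shell $\F(L,\p,\gamma,Z)$. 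I would thus evaluate \eqref{eq:c_o_m} shell by shell, substituting $b_{L+1}\mapsto b_{L+1}-\frac{ta}{\sqrt n}$ in the numerator. The Jacobian equals $1$, and the only residue is the ratio of Gaussian prior densities
\[
\frac{e^{-(b_{L+1}+ta/\sqrt n)^2/2}}{e^{-b_{L+1}^2/2}}=\exp\!\left(-\frac{ta}{\sqrt n}\,b_{L+1}-\frac{t^2a^2}{2n}\right),
\]
so that within each shell the numerator is the denominator reweighted by this factor.

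Gathering the shells, the ratio in \eqref{eq:c_o_m} reduces to $e^{-t^2a^2/(2n)}$ times the posterior-weighted average over $A_n$ of $\exp(-\frac{ta}{\sqrt n}b_{L+1})$. To obtain $1+o_P(1)$ I would argue that, conditionally on $(\gamma,Z)$, the top layer is a finite-dimensional linear-Gaussian model whose posterior satisfies a parametric Gaussian-shift (Le Cam) property and localizes $b_{L+1}$ at an $O_P(1)$ (up to logarithmic) scale; hence $\frac{ta}{\sqrt n}\,\E_\Pi[b_{L+1}\mid\gamma,Z,\Y^{(n)}]=o_P(1)$, while $e^{-t^2a^2/(2n)}\to1$. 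One must also control the mismatch between $\{f\in A_n\}$ and its image under the translation: since the shift has size $O(1/\sqrt n)$ while the radius of $A_n$ is $M_n\xi_n\gg1/\sqrt n$, the symmetric difference carries vanishing posterior mass by $\Pi[A_n^{M_n}\mid\Y^{(n)}]=1+o_P(1)$ (Theorem \ref{thm:post_concentrate}) and contributes only $o_P(1)$. Because for a linear functional neither $\hat\Psi$ nor $V_0=\norm{a}_L^2$ depends on $(\gamma,Z)$, the local centerings and variances are already global, so assembling the global BvM over the continuum of shells reduces to the uniformity of the previous bounds.

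The main obstacle is precisely this uniform control. I must establish $\sup_{(\gamma,Z)}\abs{\frac{ta}{\sqrt n}\,\E_\Pi[b_{L+1}\mid\gamma,Z,\Y^{(n)}]}=o_P(1)$ together with negligibility of the boundary mismatch, simultaneously over the infinitely many deep configurations $Z$ compatible with $A_n$. Showing that the shell-wise Gaussian-linear posterior localizes the intercept uniformly in $Z$ — exploiting the uniform boundedness $\norm{f}_\infty\le F$ enforced on each shell together with the Gaussian top-layer prior — is the genuine technical content on which the whole argument rests.
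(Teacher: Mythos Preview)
Your overall structure mirrors the paper's: the remainder vanishes identically, the constant shift touches only the top intercept $b_{L+1}$, the Gaussian prior ratio produces a factor $\exp(-ta\,b_{L+1}/\sqrt n + O(1/n))$, and the mismatch between $A_n$ and its $O(1/\sqrt n)$ translate is negligible because $\xi_n\gg n^{-1/2}$. The one substantive divergence is in how you dispose of the factor $\exp(-ta\,b_{L+1}/\sqrt n)$.

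You propose to control it by arguing that the shell-wise conditional posterior (a linear-Gaussian model in the top layer) localizes $b_{L+1}$ at an $O_P(1)$ scale, and you correctly identify uniformity in $(\gamma,Z)$ as the crux. The paper bypasses this entirely: it works on $A_n$ intersected with the sieve $\F_n=\{\|W_{L+1}\|_2^2+b_{L+1}^2\le C_n\}$ built in the proof of Theorem \ref{thm:post_concentrate} (which already carries posterior mass $1-o_P(1)$), so that $|b_{L+1}|\le\sqrt{C_n}$ holds \emph{deterministically} on the domain of integration and $|b_{L+1}|/\sqrt n\lesssim \sqrt{C_n}/\sqrt n\lesssim\xi_n\to 0$ uniformly in $(\gamma,Z)$. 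A two-line sandwich then closes the change of measure. Your route might be completable, but uniform-in-$Z$ localization of the ridge-type intercept is not obvious --- the conditional design $Z_L$ varies in an uncontrolled way with the deep weights, and a bad $Z$ could in principle push the posterior intercept far from $O(1)$ --- whereas the sieve argument is both shorter and already furnished by the concentration machinery you invoke anyway.
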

\proof Reference to a  Section \ref{thm:proof_linear} in Supplemental Material.
When $a(\cdot)$ is constant, the shifted functions  $f_t$ can be easily constructed by shifting the top intercept $b_{L+1} \to b_{L+1} -\frac{t a}{\sqrt n}$. The projection of $a$ is not needed as the remainder term is zero.

\begin{remark}\label{remark:non-constant} When $a(\cdot)$ is not constant, we need the projection of $a(\cdot)$ (conditional on $(\gamma, Z)$), denoted by $a^\gamma_{[Z]}$, to be close to $a$ for all $Z$ and $\gamma$ supported by $A_n$. In order for the BvM result to hold,  we would then require the {\em no-bias} condition 
\begin{equation}\label{eq:a_no_bias}
\expect{a-a^\gamma_{[Z]}, f-f_0}_L=o_P\left(\frac{1}{\sqrt n}\right).
\end{equation}
In order to verify this condition, one could view $Z$ as  a collection of random sparse ReLU features and study the approximability of this class. Although there are some studies  on the universal approximation error of random ReLU features \citep{sun2018random, yehudai2019power}, general conditions for the approximation ability of such projections are not yet obvious.
\end{remark}

\subsection{Squared $L^2$-norm Functional}
We consider the quadratic functional   in  (\ref{eq:Psi_L2}).  The estimation of the $L^2$-norm is closely related to minimax optimal testing of hypothesis under empirical $L^2$ distance \citep{collier2017minimax}. This functional could serve as the risk function and has been used in many testing problems \citep{cai2006adaptive,dumbgen1998new}. The next theorem relies on the following notation
\begin{align*}
&\Psi(f)=\Psi(f_0)+ 2\expect{f_0, f-f_0}_L+\norm{f-f_0}^2_L, \Psi_0^{(1)}=2 f_0, \\
&\hat \Psi=\Psi (f_0)+\frac{2W_n(f_0)}{\sqrt n},   V_0=4\norm{f_{0}}_L^2.  
\end{align*}

\begin{theorem}\label{thm:L2}
Assume the model (\ref{eq:dgp}),  where $f$  is endowed with a prior on $F(L, \p, s)$ defined in (\ref{eq:beta}), (\ref{eq:gamma}) and \eqref{eq:beta_prior}. Assume that (\ref{eq:An_cond}) is satisfied and that $f_0 \in \mH_p^\alpha$, where $p=\mO(1)$ as $n\to \infty$, $\alpha \in (\frac{p}{2}, p)$ and $\norm{f_0}_\infty\leq F$. Then we have
\[\Pi(\sqrt n (\Psi(f)-\hat \Psi)\mid \Y^{(n)}) \rightsquigarrow N(0, 4\norm{f_{0}}_L^2) \]
in $\prob_0^n$-probability as $n\to\infty$.
\end{theorem}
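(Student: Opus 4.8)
The plan is to verify the two sufficient conditions of \citet{castillo2015bernstein} recalled in \eqref{eq:rn} and \eqref{eq:c_o_m}, now for the quadratic functional, whose first-order expansion produces the remainder $r(f,f_0)=\norm{f-f_0}_L^2$ and the perturbation direction $\Psi_0^{(1)}=2f_0$. Unlike the constant linear case of Theorem \ref{thm:linear}, the remainder is genuinely nonzero and the shift direction is not itself a network, so both conditions demand work.

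The remainder condition \eqref{eq:rn} follows directly from the posterior concentration on $A_n\equiv A_n^{M_n}$. Since $\norm{f-f_0}_L\le M_n\xi_n$ for every $f\in A_n$, one has $\sup_{f\in A_n}\abs{t\sqrt n\, r(f,f_0)}\le\abs{t}\,M_n^2\sqrt n\,\xi_n^2$, and with $\xi_n=n^{-\alpha/(2\alpha+p)}\log^\delta(n)$ the relevant power of $n$ is $n^{(p-2\alpha)/(2(2\alpha+p))}$, which tends to $0$ precisely when $\alpha>p/2$. Taking $M_n\to\infty$ slowly enough to keep $\Pi[A_n^{M_n}\mid\Y^{(n)}]=1+o_P(1)$ then makes the whole expression $o_P(1)$; this is the sole role of the hypothesis $\alpha\in(p/2,p)$ in the remainder step.

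For the change of measure \eqref{eq:c_o_m}, since $2f_0$ is not a network I would follow the scheme of Section \ref{sec:main_thm}: condition on the deep coefficients $(\gamma,Z)$ and work inside the shell $\F(L,\p,\gamma,Z)$, on which every function is the affine image $W_{L+1}Z_L+b_{L+1}$ of the fixed features $Z_L$. Let $f_{0,[Z]}^\gamma$ be the $\expect{\cdot,\cdot}_L$-projection of $f_0$ onto this span; then $f_t=f-\tfrac{2t}{\sqrt n}f_{0,[Z]}^\gamma$ lives in the \emph{same} shell and differs from $f$ only through an $O(1/\sqrt n)$ shift of the top-layer weights. Because the (now fully connected) top layer carries a Gaussian prior, the induced ratio of prior densities is $1+o_P(1)$, so \eqref{eq:c_o_m} holds shell-by-shell with local centering $\hat\Psi_Z^\gamma=\Psi(f_0)+\tfrac{2}{\sqrt n}W_n(f_{0,[Z]}^\gamma)$ and local variance $V_Z^\gamma=4\norm{f_{0,[Z]}^\gamma}_L^2$. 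The decisive observation, and the reason the quadratic functional is better behaved than a generic non-constant linear one (cf.\ Remark \ref{remark:non-constant}), is that the projection error is automatically controlled by the contraction: since the network $f$ itself lies in the span, $\norm{f_0-f_{0,[Z]}^\gamma}_L\le\norm{f_0-f}_L\le M_n\xi_n$ on $A_n$. Hence $V_Z^\gamma\to V_0=4\norm{f_0}_L^2$ uniformly, and the no-bias requirement reduces to $\sup_{(\gamma,Z)}\abs{W_n(f_{0,[Z]}^\gamma-f_0)}=o_P(1)$, which gives $\sqrt n(\hat\Psi_Z^\gamma-\hat\Psi)=2W_n(f_{0,[Z]}^\gamma-f_0)=o_P(1)$ uniformly. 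I would establish this last display by a Dudley/chaining bound for the Gaussian process $W_n$ over the localized network class, combining the metric-entropy estimates of PR18 with the localization $\norm{f_{0,[Z]}^\gamma-f_0}_L\le M_n\xi_n$; this again yields the power $n^{(p-2\alpha)/(2(2\alpha+p))}$, vanishing under $\alpha>p/2$. Finally, averaging the shell-wise Gaussian limits over the posterior of $(\gamma,Z)$ and invoking the uniform convergence $\hat\Psi_Z^\gamma\to\hat\Psi$, $V_Z^\gamma\to V_0$ delivers the global statement $\Pi(\sqrt n(\Psi(f)-\hat\Psi)\mid\Y^{(n)})\rightsquigarrow\mN(0,4\norm{f_0}_L^2)$.

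The main obstacle is the uniform control of the stochastic term $W_n(f_{0,[Z]}^\gamma-f_0)$ across the infinitely many continuous shells $(\gamma,Z)$ charged by the posterior restricted to $A_n$. One must marshal the entropy of sparse ReLU networks and secure the uniform boundedness of the projections $f_{0,[Z]}^\gamma$ so that the supremum of $W_n$ over the localized class is controllable, while simultaneously checking that the top-layer Gaussian prior ratio in the change of measure is uniformly $1+o_P(1)$. Everything else is either a direct consequence of the posterior concentration result or a routine mixture-of-Gaussians argument once the uniform convergence of $(\hat\Psi_Z^\gamma,V_Z^\gamma)$ is in hand.
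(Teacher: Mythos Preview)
Your proposal is correct and follows the same route as the paper: condition on $(\gamma,Z)$, replace $2f_0$ by its $\expect{\cdot,\cdot}_L$-projection $2f_{0[Z]}^\gamma$ onto the shell, perform the change of measure through the Gaussian top-layer weights, and then pass from the local $(\hat\Psi_Z^\gamma,V_Z^\gamma)$ to the global $(\hat\Psi,V_0)$ via $\norm{f_{0[Z]}^\gamma-f_0}_L\le\norm{f-f_0}_L\le M_n\xi_n$ on $A_n$. The one refinement is that you flag the uniform control of $W_n(f_{0[Z]}^\gamma-f_0)$ over all posterior-supported $(\gamma,Z)$ as needing a Dudley chaining argument, whereas the paper simply asserts $o_P(1)$ from the pointwise variance bound $\norm{f_{0[Z]}^\gamma-f_0}_L^2\lesssim\xi_n^2$; your treatment is the more careful one here, and the PR18 entropy estimates for sparse ReLU networks do give the Dudley integral the required order under $\alpha>p/2$.
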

\proof Reference to Section \ref{thm:proof_L2} in Supplemental Material.
For this quadratic functional, we use the $(\gamma,Z)$-dependent projection $f_{0[Z]}^{\gamma}$ to approximate $\Psi_0^{(1)}=2f_0$ so that the  {\em change of measure} can be conducted through $\{W_{L+1}, b_{L+1}\}$. The additional constraint $\alpha>p/2$ is added to obtain $\xi_n^2=o(\frac{1}{\sqrt n})$, which ensures that the remainder term \eqref{eq:rn} vanishes.

\section{Adaptive Priors}\label{sec:adaptive}
The results in previous section are predicated on the assumption that the smoothness $\alpha$ is {\em known}. This is hardly ever satisfied in practice and the next natural step is to inquire whether similar conclusions can be obtained when $\alpha$ is unknown. Similarly as PR18, instead of the $\alpha$-dependent choices of the width $N$ and sparsity level $s$ in \eqref{eq:An_cond}, we deploy the following priors that adapt to smoothness
\begin{align}
\pi(N) &=\frac{\lambda^N}{(e^\lambda-1)N!},  \text{ for } \lambda\in \R, \label{eq:N_prior}\\
\pi(s) &\propto e^{-\lambda_s s},  \text{ for } \lambda_s>0.\label{eq:s_prior}
\end{align}
The parameter space now consists of shells of sparse ReLU networks with different widths and sparsity levels, i.e.
\begin{equation}
\F(L)=\bigcup_{N=1}^\infty \bigcup_{s=0}^T \F(L,\p_N^L, s),
\end{equation}
where $\F(L,\p_N^L, s)$ was defined in \eqref{eq:shell}.
An approximating sieve can be constructed that consists of sparse and not so wide networks, i.e.
\begin{equation}
\F_n=\bigcup_{N=1}^{N_n} \bigcup_{s=0}^{s_n} \F(L,\p_N^L, s)
\end{equation}
with $N_n \asymp n\xi_n^2/\log n$ and $s_n\asymp n\xi_n^2$.

Following the same strategy as in the proof Theorem 6.2 of PR18, we extend the posterior concentration  result to the case of adaptive priors  (\ref{eq:beta}), (\ref{eq:gamma}), \eqref{eq:N_prior} and \eqref{eq:s_prior}
(see Theorem \ref{thm:adaptive} in the Supplemental Material). 
The next step is extending the BvM results from the previous section. The following Theorem shows that one can obtain asymptotic normality of the quadratic and  linear functionals without the exact knowledge of  $\alpha$.

\begin{theorem}\label{thm:adaptive_bvm}
Assume the model (\ref{eq:dgp}),  where $f$  is endowed with a prior on $F(L)$ defined through  (\ref{eq:beta}), (\ref{eq:gamma}), \eqref{eq:beta_prior}, \eqref{eq:N_prior} and \eqref{eq:s_prior} with    $L\asymp \log(n)$. 
Assume that $f_0 \in \mH_p^\alpha$, where $p=\mO(1)$ as $n\to \infty, \alpha<p$ and $\norm{f_0}_\infty\leq F$. 
\begin{enumerate}[label=(\roman*)]
\item For the linear functional $\Psi(f)$ in (\ref{eq:Psi}) where $a(\cdot)$ is  constant, we obtain
\[
\Pi(\sqrt n (\Psi(f)-\hat \Psi)\mid \Y^{(n)}) \rightsquigarrow N(0, \norm{a}_L^2),
\]
where $\hat \Psi=\Psi(f_0)+\frac{1}{\sqrt n}W_n(a)$.
\item For the square $L^2$-norm functional $\Psi(f)$ in (\ref{eq:Psi_L2}),  we obtain for $\alpha \in (\frac{p}{2}, p)$
\[
\Pi(\sqrt n(\Psi(f)-\hat \Psi)\mid \Y^{(n)}) \rightsquigarrow N(0, 4\norm{f_0}_L^2)
\]
where $\hat \Psi=\Psi(f_0)+\frac{2}{\sqrt n} W_n(f_0)$.
\end{enumerate}
\end{theorem}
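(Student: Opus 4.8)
The plan is to reduce the adaptive statement to the fixed-architecture arguments of Theorems~\ref{thm:linear} and~\ref{thm:L2}, localized by the adaptive posterior concentration result. First I would invoke Theorem~\ref{thm:adaptive} to conclude that $\Pi[A_n\mid\Y^{(n)}]=1+o_P(1)$ at the adaptive rate $\xi_n=n^{-\alpha/(2\alpha+p)}\log^\delta(n)$, so that the posterior essentially lives on the intersection of $A_n$ with the sieve $\F_n=\bigcup_{N=1}^{N_n}\bigcup_{s=0}^{s_n}\F(L,\p_N^L,s)$. Because the priors \eqref{eq:N_prior} and \eqref{eq:s_prior} place exponentially decaying mass on large widths $N$ and sparsity levels $s$, the posterior mass assigned to architectures outside $\F_n$ is negligible, and all subsequent analysis may be carried out on $A_n\cap\F_n$ without invoking the $\alpha$-dependent choices in \eqref{eq:An_cond}.

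Next I would verify the two sufficient conditions of \citet{castillo2015bernstein}, namely \eqref{eq:rn} and \eqref{eq:c_o_m}, on $A_n\cap\F_n$. The remainder condition is unchanged from the non-adaptive proofs: it is identically zero for the linear functional, and for the quadratic functional it is controlled by $\norm{f-f_0}_L^2\leq M_n^2\xi_n^2=o(1/\sqrt n)$ under the assumption $\alpha\in(p/2,p)$, exactly as in Theorem~\ref{thm:L2}. For the \emph{change of measure} condition I would decompose $A_n\cap\F_n$ into the $(\gamma,Z)$-dependent shells $\F(L,\p_N^L,\gamma,Z)$, now also indexed by $(N,s)$, and perform the shift $f\mapsto f_t=f-t\Psi_0^{(1)}/\sqrt n$ through the top-layer weights $\{W_{L+1},b_{L+1}\}$ within each shell. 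The structural feature that makes this possible is that the top layer is fully connected with the same standard normal prior in every shell regardless of $N$ and $s$; hence the shift preserves the shell labels $(N,s,\gamma,Z)$ and the associated Radon--Nikodym derivative takes the identical form across all shells.

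Finally I would show that the shell-wise centering points $\hat\Psi_Z^\gamma$ and variances $V_Z^\gamma$ converge uniformly to the global $\hat\Psi$ and $V_0$ over all architectures supported on $A_n\cap\F_n$, and then integrate the shell-wise BvM against the adaptively updated posterior over $(N,s,\gamma,Z)$. For the linear functional with constant $a$ the shift reduces to translating the intercept $b_{L+1}\mapsto b_{L+1}-ta/\sqrt n$ and no projection is needed, so this step is immediate and $V_0=\norm{a}_L^2$. For the quadratic functional one uses the conditional projection $f_{0[Z]}^\gamma$ of $\Psi_0^{(1)}=2f_0$ onto the top-layer span, and the uniform approximation guaranteed by membership in $A_n$ delivers convergence to $V_0=4\norm{f_0}_L^2$.

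I expect the main obstacle to be controlling the change-of-measure ratio \eqref{eq:c_o_m} uniformly over the enlarged sieve. Unlike the fixed-architecture setting, the number of shells now grows with $n$ through $N_n\asymp n\xi_n^2/\log n$ and $s_n\asymp n\xi_n^2$, so the uniform convergence of $\hat\Psi_Z^\gamma$ and $V_Z^\gamma$---and, for the quadratic case, the vanishing of the projection error of $f_{0[Z]}^\gamma$ against $2f_0$---must be established simultaneously across a growing family of architectures rather than for a single width. The adaptive concentration bound of Theorem~\ref{thm:adaptive} must therefore be strong enough to absorb this union over $(N,s)$ while still leaving the shift's density ratio asymptotically equal to one.
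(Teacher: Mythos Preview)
Your proposal is correct and follows essentially the same route as the paper: localize on $A_n$ via the adaptive concentration result (Theorem~\ref{thm:adaptive}), truncate to the sieve $\F_n$ using the posterior negligibility of $\{N>N_n\}$ and $\{s>s_n\}$ (the paper cites Corollary~6.1 of PR18 for this rather than the prior tails directly), decompose the Laplace transform over $(N,s,\gamma,Z)$, and then rerun the shell-wise change-of-measure arguments from Theorems~\ref{thm:linear} and~\ref{thm:L2} verbatim within each shell before aggregating. Your worry about uniformity over the growing sieve is not a genuine obstacle in the paper's argument, since the relevant bounds on $r(f,f_0)$, $|V_Z^{N,s,\gamma}-V_0|$, and $\sqrt n(\hat\Psi_Z^{N,s,\gamma}-\hat\Psi)$ depend only on $\|f_{0[Z]}^{N,s,\gamma}-f_0\|_L\leq\|f-f_0\|_L\lesssim\xi_n$, which is uniform over all shells intersecting $A_n$.
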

\proof Reference to Section \ref{thm:proof_adaptive} in Supplemental Material.
\begin{remark}
Similar constraints on the smoothness $\alpha$ have been imposed in other related works \citep{farrell2018deep}. However, unlike in other
 developments \citep{schmidt2017nonparametric, farrell2018deep}, the convergence rates we build on are {\em adaptive} in the sense
 that, beyond the assumption $\alpha < p$, the exact knowledge of $\alpha$ is {\em not required}. 
 When the imposed smoothness assumptions do not hold, one could still obtain asymptotic normality via misspecified BvM-type results \citep{kleijn2012bernstein} but uncertainty quantification with the
 implied credible sets would be problematic.
\end{remark}

\begin{remark}
It is worth noting that our results do not hinge on the assumption that $f_0$ came from the prior. Instead, $f_0$ is an arbitrary H\"older
smooth function, not necessarily a neural network. While the model is ultimately mis-specified, our results are attainable due to the expressibility of deep
 ReLU networks where one can approximate $f_0$ with deep  learning mappings with a rapidly vanishing error. The
 fact that our posterior concentrates around the truth at the optimal rate makes the derivation of BvM and valid inference feasible.
\end{remark}

\section{Discussion}\label{sec:discussion}

In this paper, we obtained asymptotic normality results for linear  and squared $L^2$-norm functionals for deep, sparse ReLU networks.  These results can be used as a basis for semi-parametric inference and can be extended in 
various ways.

First, one could obtain similar formulations for general smooth linear functionals by verifying  the {\em no bias}  condition in  (\ref{eq:a_no_bias}). This relates to the approximation ability of random ReLU features  mentioned in Remark \ref{remark:non-constant}. The ReLU features act similarly as random rotational trees. However,  the nested nature of partitions and local linear functions make the analysis difficult. Random features have gained much attention recently. For instance, \citet{rahimi2008random} show how random features can be connected to kernel methods.  
\citet{sun2018random} discuss the universal approximation bounds for compositional ReLU features. \citet{huang2006universal} and  \citet{huang2014insight} provide similar results and they propose an implementation of the extreme learning machine implementation,  where only the top layer is trained while deep layers are sampled randomly from some distribution. A time-series variant of this algorithm is the Deep Echo State Network \citep{sun2017deep, mcdermott2018hierarchical}. 

Another way to obtain BvM for smooth linear functionals would be to construct a less-restrictive projection of the first-order term $\Psi_0^{(1)}$. \citet{schmidt2017nonparametric} shows that parallelization can be realized using embedding networks.  The shifted function $f_t$ could  be constructed as an embedding network that simultaneously represents $(f, \Psi_0^{(1)})$. This representation could  leverage the approximability of smooth functions $a$ with deep neural networks.


To sum up, our semi-parametric BvM results certify that (semi-parametric) inference with Bayesian deep learning
 is valid and that meaningful uncertainty quantification is attainable. Possible applications of our results include
 casual inference, whereby embedding our model within a missing data framework \citep{ray2018semiparametric}, the
 average functional can be used for average treatment effect estimation. In this vein, our results are relevant for the
 development/understanding of the widely sought after machine learning methods for causal inference \citep{athey2017efficient}. In particular, an extension of our work along these lines will constitute a fully-Bayesian variant of
 the doubly-robust plug-in approach of \citet{farrell2018deep}. In addition, the main theorems (Theorem 3.1-3) provide foundations for testing hypotheses such as exceedance of a level $\sum_{i=1}^n f_0(x_i) >c$.
 Lastly, an important future direction will be quantifying uncertainty about  the {\em entire function} $f_0$ (not only its functionals), which was recently formalized for Bayesian CART   by \citet{castillo_rockova}.

 Our work is primarily concerned with theoretical frequentist study of the
 posterior distribution. Investigating practical usefulness and computation of our priors is an important future direction. 
 There are various ways to approximate aspects of  deep learning posterior distributions under spike-and-slab prior, see \citet{polson2018posterior} for a discussion on possible implementations. In addition, \citet{deng2019adaptive} proposed an adaptive empirical Bayesian method for sparse deep learning with a self-adaptive spike-and-slab prior.


\section*{Acknowledgements}
The authors gratefully acknowledge the support from the James S. Kemper Faculty Fund at the Booth School of Business and the National Science Foundation (Grant No. NSF DMS-1944740)

\bibliography{ss-elm}

\newpage
\pagenumbering{gobble}
\onecolumn

\title{Uncertainty Quantification for Sparse Deep Learning}
\author{  Yuexi Wang and Veronika Ro\v{c}kov\'{a} 
}
\date{Booth School of Business, University of Chicago}

\maketitle

\section{Supplemental Material}
\subsection{Rudiments}
With the prior measure $\Pi(\cdot)$ on $\F(L,\p, s)$, given observed data $\Y^{(n)}=(Y_1, \ldots, Y_n)'$, inference about $f_0$ is carried out via the posterior distribution
\begin{equation*}
\Pi(A|\Y^{(n)}, \{\x_i\}_{i=1}^n)=\frac{\int_A \prod_{i=1}^n \Pi_f(Y_i|\x_i) d\Pi(f) }{\int \prod_{i=1}^n \Pi_f(Y_i|\x_i) d\Pi(f) }, \forall A \in \mB
\end{equation*}
where $\mB$ is a $\sigma$-field on $\F(L,\p, s)$ and where $\Pi_f(Y_i|\x_i)$ is the likelihood function for the output $Y_i$ under $f$.

\subsection{Posterior Concentration Rate}
{First, we show that the posterior concentrates at the optimal (near-minimax) rate. We modify the result in  \citet{polson2018posterior}  to our prior which differs in two aspects: 
(1) the top layer is fully connected, (2) the top layer coefficients are assigned a Gaussian prior.
First, we show  that our fully-connected top layer  networks can approximate $f_0$ as well as the networks considered in \citet{polson2018posterior} (i.e. with a sparse top layer). 
The following Lemma demonstrates how one can construct a fully connected top layer network from any network considered in PR18 so that their outputs are the same.
 A graphical illustration of this construction can be found in Figure \ref{fig:network_construct}.
 
 \begin{figure}[!ht]
\begin{center}
\subfigure[$f^{DL}_{\B}(\x)$ with sparse top layer]{
\includegraphics[width=0.4\textwidth]{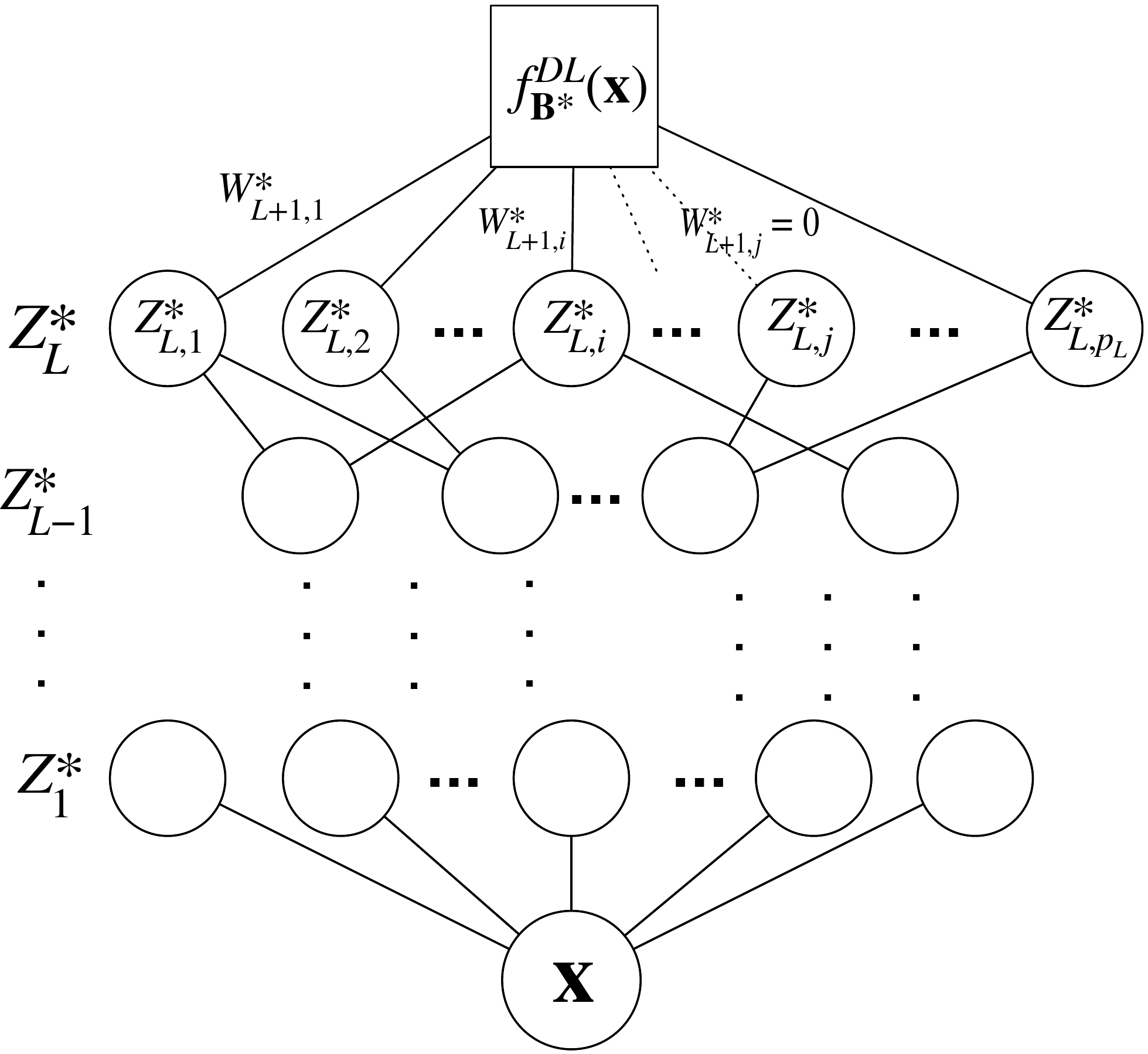}
}
\subfigure[$f^{DL}_{\B^*}(\x)$ with fully-connected top layer]{
\includegraphics[width=0.4\textwidth]{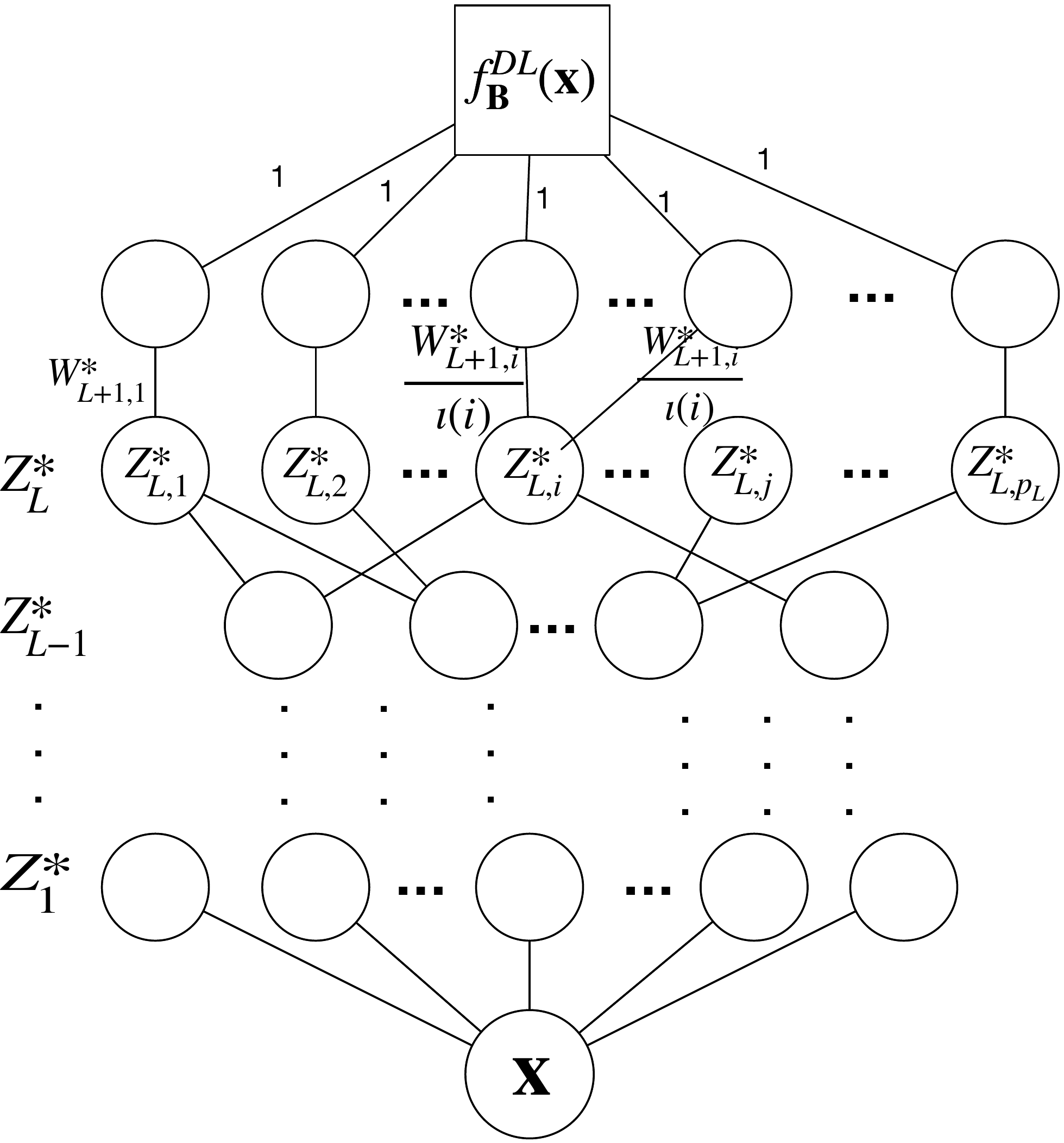}
}
\end{center}
\caption{Network Construction}\label{fig:network_construct}
\end{figure}

\begin{lemma}\label{lem:approx_error}
Assume a sparse network $f_{\B^*}^{DL} \in \tilde\F(L,\p^*,s^*)$ of the form (6) in PR18 with a sparsity pattern $\gamma$, where $\tilde \F(L,\p^*,s^*)$ is defined in Section 4 of PR18. 
With $\p^*=(p,p^*_1, \ldots, p^*_L,1)\in\N^{L+2}$ and  $\abs{\gamma}=s^*$, there exists at least one network $f_{\B}^{DL}\in \F(L+1, \p, s)$ with $\p=(p,p^*_1, \ldots,p^*_L, p^*_L,1)\in\N^{L+3}$ and  $\abs{\gamma}=s\leq s^*+2p^*_L$  such that $f_{B^*}^{DL}(\x)=f_{\B}^{DL}(\x)$ for any $\x \in \R^p$.
\end{lemma}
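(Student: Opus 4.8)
The plan is to give an explicit \emph{padding} construction that turns a network with a sparse top layer into an equivalent one with a fully connected top layer, at the cost of a single additional hidden layer. The key structural fact I would exploit is that the last hidden layer $Z_L=\sigma_{b_L}(W_L Z_{L-1})$ is the output of a ReLU and is therefore \emph{nonnegative} coordinatewise, so a ReLU with the identity weight matrix and zero shift acts on it as the identity map. This is legitimate precisely because the top layer is the only \emph{linear} layer in \eqref{eq:DL}, so every hidden activation has already been passed through a ReLU.

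Concretely, I would first copy the bottom $L$ layers of $f_{\B^*}^{DL}$ verbatim, reusing the weights $\{W_l,b_l\}_{l=1}^L$ of the original network; this reproduces the same last hidden layer $Z_L\in\R^{p^*_L}$ with $Z_L\ge 0$. Next I would insert a new hidden layer of width $p^*_L$ whose weight matrix is the $p^*_L\times p^*_L$ identity and whose shift is zero, so that $Z_{L+1}=\sigma_0(I\,Z_L)=\max(Z_L,0)=Z_L$. Finally I would attach a fully connected top linear layer carrying exactly the original top coefficients, i.e. $W_{L+2}=W_{L+1}$ and $b_{L+2}=b_{L+1}$, where the entries that were absent (zero) in the original sparse top are simply retained as present-but-zero-valued connections.

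With this construction the output identity $f_{\B}^{DL}(\x)=W_{L+1}Z_L+b_{L+1}=f_{\B^*}^{DL}(\x)$ holds for every $\x\in\R^p$, since the inserted layer passes $Z_L$ through unchanged. It then remains to verify the architecture and the sparsity bound. The bottom $L$ layers contribute the same non-top active parameters as the original; the identity layer adds $p^*_L$ nonzero (diagonal) weights; and declaring the top layer fully connected adds at most $p^*_L+1$ active parameters. Offsetting the latter against the at-least-one active parameter present in the original sparse top yields $s\le s^*+2p^*_L$, and the new architecture is $\p=(p,p^*_1,\ldots,p^*_L,p^*_L,1)$ with $L+1$ hidden layers, exactly as claimed.

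The main obstacle, such as it is, is conceptual rather than computational: one must be certain that replacing a ReLU by an identity map does not alter the function, which is exactly why the nonnegativity of $Z_L$ is essential --- without it the inserted layer would clip negative coordinates. The only genuinely delicate point is the sparsity accounting: because the top layer is now fully connected, all $p^*_L+1$ of its parameters count toward $s$ even when some carry value zero, so the bound closes only once one observes that the original sparse top must contain at least one active parameter to offset the extra bias term.
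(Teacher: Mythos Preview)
Your construction is correct and proceeds along a different, somewhat simpler route than the paper's. Both proofs insert one additional hidden layer of width $p^*_L$ between $Z_L$ and the output while keeping the bottom $L$ layers unchanged; they differ in where the original top-layer coefficients end up. The paper pushes the entries of $W^*_{L+1}$ \emph{down} into the inserted weight matrix $W_{L+1}$ --- each nonzero weight is split evenly across one or more rows so that every new hidden unit is wired to exactly one old hidden unit --- and then sets the new top row to the all-ones vector $W_{L+2}=1'_{p^*_L}$, recovering the output via the identity $1'_{p^*_L}W_{L+1}=W^{*\prime}_{L+1}$. You instead take $W_{L+1}=I$ and carry $W^*_{L+1}$ \emph{up} unchanged as $W_{L+2}$. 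Your version has the advantage of making the ReLU at the inserted layer completely transparent: since $Z_L\ge 0$ coordinatewise, $\sigma_0(IZ_L)=Z_L$ holds exactly, so there is nothing further to check. The paper's construction, on the other hand, produces a top row with no zero entries, which matches the spirit of the fully-connected-top prior a bit more directly; for the statement of the lemma either suffices. The sparsity counts come out the same up to the edge case you flag (an all-zero original top layer, hence $f_{\B^*}^{DL}\equiv 0$), which is indeed handled trivially.
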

}
\begin{proof}
We construct one function $f_{\B}^{DL}$ that satisfies the stated conditions.  We denote $\B=\{(W_l,b_l):1\leq l\leq L+2\}$ such that $\p=(p, p^*_1, \ldots, p^*_L, p^*_L,1) \in \N^{L+3}$ and  choose the same deep coefficients $\{W_l, b_l\}=\{W^*_l, b^*_l\}$ for each $1\leq l\leq L$. The parameters of the top layer  are set as $W_{L+2}=1_{p^*_L}'$ and  $b_{L+2}=b^*_{L+1}$. 
Choosing the matrix $W_{L+1}$ in a way such that  $W_{L+1}^{\prime}1_{p^*_L}=W_{L+1}^{*\prime}$ we obtain
\[
f_{\B}^{DL}(\x)=W_{L+2}Z_{L+1}+b_{L+2}=W_{L+2}W_{L+1}Z^*_{L} +b^*_{L+1}=W^*_{L+1}Z^*_{L}+b^*_{L+1}=f_{\B*}^{DL}(\x).
\]

The procedure we use to generate $W_{L+1}$ from $W^*_{L+1}$ can be found in Algorithm \ref{alg:network_construct}.

\begin{algorithm}[!ht]
\caption{Network Construction of $\F(L+1, \p,s)$ from $\tilde \F(L,\p^*, s^*)$}\label{alg:network_construct}
\begin{algorithmic}[1]
\item We assume $W^*_{L+1,1}\neq 0$
\item Initialize $\{W_l, b_l\}_{l=1}^L=\{W_l^*, b_l^*\}_{l=1}^L, W_{L+1}=0_{p_L\times p_L},b_{L+1}=0, W_{L+2}=\1'_{p_L}, b_{L+2}=b^*_{L+1}$
\Function{ $h(j)$}{}    \Comment{the index of last connected  node (up to j) in layer L+1 of $f_{\B^*}^{DL}$}
$h(j):=\max\{ k\leq j: W_{L+1,k}\neq 0\}$
     \EndFunction
\Function{$\iota(j)$}{}\Comment{\#nodes in layer L+1 in $f_{B}^{DL}$ that will be connected to $Z_{L,h(j)}$}
\State $\iota(j) :=\sum_{i=1}^{p_L}\1(h(i)=h(j))$
\EndFunction
 \Procedure{Generate $W_{L+1}$ from $W^*_{L+1}$}{} 
 \For{each $j=1:p_{L}$}
    \If{$h(j)=j$} \Comment{when $Z_{L,j}$ previously connected in $f_{\B^*}^{DL}$}
    
    \State $W_{L+1,i,i}=\frac{W^*_{L+1,j}}{\iota(j)}$ \Comment{connect $Z_{L,j}$ to $Z_{L+1, j}$ with the averaged weights}
\Else \Comment{when $Z_{L,j}$ previously unconnected in $f_{\B^*}^{DL}$}
\State $W_{L+1,j, h(j)}= \frac{W^*_{L+1,h(j)}}{\iota(j)}$ \Comment{connect $Z_{L,h(j)}$ to $Z_{L+1, j}$ with the averaged weights}
        \EndIf

    \EndFor

\EndProcedure

\end{algorithmic}
\end{algorithm}

It turns out that the sparsity of this extended network satisfies
\[
s=s^*+\norm{W_{L+2}}_0+\norm{W_{L+1}}_0-\norm{W^*_{L+1}}_0=s^*+2p^*_L-\norm{W^*_{L+1}}_0\leq s^*+2p^*_L.\qedhere
\]

\end{proof}

With the construction from Lemma \ref{lem:approx_error}, our network class could achieve at least the same approximation error as the one in \citet{schmidt2017nonparametric}. 
To recover the posterior concentration rate results in Theorem 6.1 in PR18, we impose the following  conditions on $(L, s, N)$
\[
\left\{\begin{array}{l}
L^*\propto \log(n)\\
s^* \lesssim n^{p/(2\alpha+p)}\\
N^* \propto n^{p/(2\alpha+p)}/\log(n)
\end{array}
\right. \Rightarrow \left\{\begin{array}{l}
L=L^*+1 \propto \log(n)\\
s\leq s^*+2p^*_L=s^*+24pN^* \lesssim n^{p/(2\alpha+p)}+n^{p/(2\alpha+p)}\frac{p}{\log(n)} \lesssim n^{p/(2\alpha+p)}\\
N=N^* \propto n^{p/(2\alpha+p)}/\log(n)
\end{array}
\right.
\]
The assumptions on the network structure (depth, width and sparsity) maintain very similar for our new prior.

We formally state the posterior concentration result for our prior below.
\begin{theorem}\label{thm:post_concentrate}
Assume $f_0\in \mH^\alpha_p$ where $p=O(1)$ as $n\to \infty$, $\alpha<p$ and $\norm{f_0}_\infty\leq F$. Let $L, s$ be as in \eqref{eq:An_cond}, and $\p=(p, 12 pN,\ldots, 12pN, 1)'\in \N^{L+2}$, where $N=C_N\lfloor n^{p/(2\alpha+p)}/\log(n)\rfloor$ for some $C_N>0$. Under the priors from Section 2.1, the posterior distribution concentrates at the rate $\epsilon_n=n^{-\alpha/(2\alpha+p)}\log^\delta(n)$ for some $\delta>1$ in the sense that
\[
\Pi(f_{\B}^{DL}\in \F(L,p,s):\norm{f-f_0}_n>M_n\epsilon_n\mid Y^{(n)})\to 0
\]
in $\prob^n_0$ probability as $n\to \infty$ for any $M_n\to \infty$.
\end{theorem}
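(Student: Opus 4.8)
The plan is to verify the three standard conditions of the Ghosal--Ghosh--van der Vaart posterior contraction theorem for fixed-design Gaussian regression --- prior mass near the truth, existence of a sieve with controlled metric entropy, and negligible prior mass outside the sieve --- and to check that neither of the two modifications relative to PR18 (the fully connected top layer and the Gaussian slab on the top weights) spoils any of them. Since the noise is standard Gaussian, Hellinger distance is equivalent to the empirical norm $\norm{\cdot}_n$, so contraction in $\norm{\cdot}_n$ follows from the usual prior-mass/entropy/sieve trilogy, exactly as in PR18. Throughout I abbreviate $\xi_n = n^{-\alpha/(2\alpha+p)}\log^\delta(n)$ and note that $n\xi_n^2 \asymp n^{p/(2\alpha+p)}\log^{2\delta}(n)$, which already exceeds both $s$ and $p_L$ by logarithmic factors.

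First I would establish the prior-mass bound. By Lemma \ref{lem:approx_error}, for any $\alpha$-H\"older $f_0$ there is a network $f_{\B}^{DL}\in\F(L,\p,s)$ reproducing the output of the sparse-top network of PR18, hence inheriting the Schmidt-Hieber approximation error $\norm{f_{\B}^{DL}-f_0}_\infty \lesssim \xi_n$; moreover its deep weights lie in $[-1,1]$, matching the support of the uniform slab, and its $p_L+1$ top weights are bounded. I would then lower-bound $\Pi(\norm{f-f_0}_n \le \xi_n)$ by the probability of (i) drawing the correct connectivity pattern $\gamma$, which costs the combinatorial factor $\binom{T-p_L-1}{s-p_L-1}^{-1}$ and contributes $e^{-C n\xi_n^2}$ after taking logs, times (ii) the uniform-slab mass of a small cube around the target deep weights, which is $\eta^{s}$ and again contributes $e^{-C n\xi_n^2}$ for polynomially small $\eta$, times (iii) the Gaussian-slab mass near the bounded target top weights. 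The only genuinely new piece is (iii): because the standard-normal density is bounded below on compacta, this mass is at worst $\eta^{p_L+1}$, whose log is of order $p_L\log(1/\eta)\asymp n^{p/(2\alpha+p)}\lesssim n\xi_n^2$. Thus the prior-mass condition $\Pi(\norm{f-f_0}_n\le\xi_n)\ge e^{-C n\xi_n^2}$ survives.

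Next I would take the sieve $\F_n$ to consist of networks of the prescribed architecture with at most $s$ active connections below the top and all weights bounded by a truncation level $B_n$ polynomial in $n$. Its metric entropy under $\norm{\cdot}_n$ is controlled precisely as in PR18/Schmidt-Hieber, scaling like $(s+p_L)\log(n B_n L\bar p)\lesssim n\xi_n^2$ once \eqref{eq:An_cond} is imposed; the extra $p_L+1$ unpenalized top parameters enlarge this only by a lower-order additive term since $p_L\lesssim n^{p/(2\alpha+p)}$. For the remaining-mass condition I would bound $\Pi(\F\setminus\F_n)$ by the probability that the sparsity exceeds $s$ (handled by the architecture prior as in PR18) plus the probability that some top weight exceeds $B_n$; the latter is where the Gaussian slab enters, and its tail $\prob(\abs{\beta}>B_n)\le e^{-B_n^2/2}$ renders this term $e^{-(C+4)n\xi_n^2}$ as soon as $B_n$ grows slightly faster than $\sqrt{n\xi_n^2}$. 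Combining the three conditions through the GGV theorem and invoking the equivalence of $\norm{\cdot}_n$ and $\norm{\cdot}_L$ yields contraction at rate $\epsilon_n$, with the logarithmic slack absorbed into $\log^\delta(n)$ for $\delta>1$.

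The main obstacle I anticipate is not any new approximation-theoretic difficulty --- Lemma \ref{lem:approx_error} already transfers the PR18 approximation rate to the fully connected class --- but the interplay between the unbounded Gaussian top-layer prior and the sieve. One must choose the truncation level $B_n$ large enough that the target approximant sits comfortably inside $\F_n$ and the prior-mass lower bound is undisturbed, yet small enough that the entropy of the truncated class stays $O(n\xi_n^2)$ while the Gaussian tail outside $\F_n$ remains super-exponentially small. Balancing these competing requirements is the delicate step, though it is routine given the generous exponential slack afforded by $\xi_n$.
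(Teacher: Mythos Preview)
Your proposal is correct and follows essentially the same route as the paper: verify the three Ghosal--van der Vaart conditions, with Lemma \ref{lem:approx_error} supplying the approximant and the unbounded Gaussian top layer handled by a truncation sieve together with standard Gaussian tail and small-ball bounds. The only differences are cosmetic---the paper takes the sieve as the $\ell_2$-ball $\{\|W_{L+1}\|_2^2+b_{L+1}^2\le C_n\}$ on the top weights (using a $\chi^2$ tail bound) rather than your coordinate-wise truncation, and there is no ``sparsity exceeds $s$'' event to control here since $s$ is \emph{fixed} in this theorem (that piece enters only in the adaptive result).
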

\begin{proof}
The statement can be proved as in Rockova and Polson (2018) by verifying the following three conditions (adopted from \citet{ghosal2007convergence})
\begin{align}
\sup_{\epsilon>\epsilon_n} \log \mE\left(\frac{\epsilon}{36}; A_{\epsilon,1}\cap \F_n;\norm{\cdot}_n \right)&\leq n\epsilon_n^2\label{eq:entropy}\\
\Pi(A_{\epsilon_n,1})&\geq e^{-dn\epsilon_n^2}\label{eq:prior_concentrate}\\
\Pi(\F\backslash\F_n)&\leq e^{-(d+2)n\epsilon_n^2}\quad\text{for some $d>2$}.\label{eq:prior_mass}
\end{align}

We define $\F_n$, for some $C_n=Cn^{p/(2\alpha+p)}\log^{2\delta}(n)$  and    $C>0$,  as
\[
\F_n=\{f_{\B}^{DL}\in \F(L,\p,s): \norm{W_{L+1}}^2_2+b^2_{L+1}\leq C_n\}.
\]
Here $\F_n \subset \F(L,\p,s)$ is an approximating space (a sieve) consisting of functions whose top layer weights are contained in a ball of radius $\sqrt{C_n}$ in $\R^{p_L+1}$. We show that this sieve contains most of the prior mass as required in (\ref{eq:prior_mass}) for $C>0$ large enough. Indeed, 
because $p=\mathcal{O}(1)$ and 
\[
p_L+1=12\,p\,N+1 \asymp n^{p/(2\alpha+p)}/\log(n)
\]
we have
\begin{align*}
\Pi(\F\backslash \F_n)&=\prob\left(\norm{W_{L+1}}^2_2+b^2_{L+1}>  C_n \right)\\
&=\prob(\chi_{p_L+1}^2> C_n)=\prob(e^{\frac{1}{4}\chi_{p_L+1}^2}>e^{\frac{C_n}{4}})\leq e^{-\frac{C_n}{4}}2^{(p_L+1)/2}\rightarrow 0.
\end{align*}

Next, we want to verify the entropy condition (\ref{eq:entropy}). Because
\[
\{f_{\B}^{DL}\in \F_n: \norm{f}_{\infty}\leq \epsilon\}\subset \{f_{\B}^{DL}\in \F_n: \norm{f}_n\leq \epsilon\}
\]
we  have
\begin{align*}
\sup_{\epsilon>\epsilon_n} \log \mE\left(\frac{\epsilon}{36}; f\in \F_n;\norm{\cdot}_\infty \right)&\lesssim \log\left\{ \underbrace{ \left( \frac{2}{\frac{\epsilon_n/36}{V(L+1)}}\right)^{s-(p_L+1)}}_{(I)}\underbrace{\left(\frac{\sqrt{C_n}}{\frac{\epsilon_n/36}{V(L+1)}}\right)^{p_L+1}}_{(II)}\right\}\\
&\lesssim(s+1) \log\left( \frac{72}{\epsilon_n}(L+1)(12pN+1)^{2(L+1)} \right)+(p_L+1)\log(n^{p/(2\alpha+p)}\log^{2\delta}(n))\\
&\lesssim n^{p/(2\alpha+p)}\log(n)\log\big(n/\log^\delta(n)\big)+ n^{p/(2\alpha+p)}/\log(n) \log\big(n\log(n)\big)\\
&\lesssim  n^{p/(2\alpha+p)} \log^2(n) \lesssim n\epsilon_n^2
\end{align*}
for some $\delta>1$,  where 
\begin{equation}\label{eq:V}
V=\prod_{l=0}^{L+1}(P_l+1)
\end{equation}
and using the fact that $s\lesssim n^{p/(2\alpha+p)}$ and $L\asymp \log(n)$.

The covering number $\mE(\frac{\epsilon}{36}; f\in \F_n;\norm{\cdot}_\infty )$ consists of two parts. The part (I) stands for the covering number  for the deep architecture, while the part (II) is the covering number for the top layer. The calculations of the covering numbers are derived from Lemma 12 of \citet{schmidt2017nonparametric} which shows
\[
\norm{f_{\B}^{DL}-f_{\B^*}^{DL}}_\infty \leq \norm{\B-\B^*}_\infty V(L+1)
\]
with $V$ defined as in (\ref{eq:V}). To make sure $\norm{f_{\B}^{DL}-f_{\B^*}^{DL}}_\infty\leq \frac{\epsilon_n}{36}$, we want $\norm{\B-\B^*}_\infty\leq \frac{\epsilon_n/36}{2V(L+1)}$. Since all deep parameters are bounded in absolute value by one, we can discretize the unit cube $[-1,1]^{s-p_L-1}$ with a grid of a diameter $\frac{\epsilon_n/36}{2V(L+1)}$ and obtain the covering number in part (I). For the top layer, the weights and the bias term are contained inside a  $(p_L+1)$-dimensional ball with a radius $\sqrt{C_n}$. Part(II) for $\norm{\cdot}_\infty$ is bounded by the $\frac{\epsilon_n/36}{2V(L+1)}$-covering number of a Euclidean ball of radius $\sqrt{C_n}$ in $(p_L+1)$-dimensional space \citep{edmunds2008function}.

Last, we need to show that the prior concentrates enough mass around the truth in the sense of  (\ref{eq:prior_concentrate}). From Lemma \ref{lem:approx_error} and Lemma 5.1 in PR18, we know that there exists a neural network $\hat f_{\hat \B}\in \F_n(L,\p,s)$, such that
\[
\norm{\hat f_{\hat \B}-f_0}_n\leq \epsilon/2.
\]
We denote the connectivity pattern of $\hat f_{\hat \B}$ as $\hat \gamma$ (with $\hat s=|\hat\gamma|$) and the corresponding set of coefficients as $\hat \B$. Following the same arguments as in  PR18,  we have
\[
\{f_{\B}^{DL}\in \F_n(L,\p,s):\norm{f_{\B}^{DL}-f_0}_n\leq \epsilon_n \}\supset \{f_{\B}^{DL}\in \F_n( L,\p,\hat\gamma):\norm{f_{\B}^{DL}-f_0}_n\leq \epsilon_n/2\}.
\]
We now denote with $\b\in\R^{T}$ and $\hat\b\in\R^{T}$ the vectorized nonzero coefficients in $\B$ and $\hat\B$ that have the sparsity pattern $\hat \gamma$. We use $\gamma(\b)$ to pin down the sparsity pattern of $\b$.
Using  Lemma 12 of \citet{schmidt2017nonparametric} we have
\begin{equation}
\left\{f_{\B}^{DL}\in \F_n( L,\p,\hat\gamma):\norm{f_{\B}^{DL}-f_0}_n\leq \epsilon_n/2 \right\} \supset \left\{\b\in \R^{T}: \gamma(\beta)=\hat \gamma \, \mathrm{ and } \,\norm{\b-\hat \b}_\infty\leq \frac{\epsilon_n}{2V(L+1)} \right\}.
\end{equation}
Altogether, we can write
\begin{align*}
&\Pi(f_{\B}^{DL}\in \F_n(L,\p,\hat s):\norm{f_{\B}^{DL}-f_0}_n\leq \epsilon_n)>\frac{\Pi(f_{\B}^{DL}\in \F_n( L,\p,\hat\gamma):\norm{f_{\B}^{DL}-f_0}_n\leq \epsilon_n/2)}{{{T-p_L-1}\choose{\hat s-p_L-1}}}\\
&>\frac{1}{{{T-p_L-1}\choose{\hat s-p_L-1}}} \Pi\left(\b\in \R^{T}:  \gamma(\beta)=\hat \gamma \, \mathrm{ and } \, \norm{\b-\hat \b}_\infty\leq \frac{\epsilon_n}{2V(L+1)}\right).
\end{align*}
We note that with $\hat s\asymp n^{p/(2\alpha+p)}, L\asymp \log(n)$ and $N\asymp n^{p/(2\alpha+p)}/\log(n)$
\[
\frac{1}{{{T-p_L-1}\choose{\hat s-p_L-1}}} \geq e^{-(L+1)\hat s\log(12pN)}> e^{-D_1\log^2(n)n^{p/(2\alpha+p)}}
\]
for some $D_1>0$. In addition, under the uniform prior on the deep coefficients and the standard normal prior on the top layer, we can write
\begin{align}
&\Pi\left(\b\in \R^{T}: \gamma(\beta)=\hat \gamma \, \mathrm{ and } \,  \norm{\b-\hat \b}_\infty\leq \frac{\epsilon_n}{2V(L+1)}\right)\geq \left(\frac{\epsilon_n}{2V(L+1)}\right)^{\hat s-p_L-1}\prod_{j>T-p_L-1}\Pi\left(\abs{\beta_j-\hat \beta_j}\leq \frac{\epsilon_n}{2V(L+1)}\right)\nonumber\\
&=\left(\frac{\epsilon_n}{2V(L+1)}\right)^{\hat s-p_L-1}\prod_{j>T-p_L-1} \int_{ -\frac{\epsilon_n}{2V(L+1)}}^{\frac{\epsilon_n}{2V(L+1)}}d\Pi(\beta_j
-\hat \beta_j)\label{eq:prior_mass2}.
\end{align}
where the last $T-p_L-1$ coefficients in $\b$ are the top layer weights and bias as shown in (\ref{eq:beta_prior}).

We want to recenter the normal distribution at $0$ rather than $\hat \beta_j$ by using the following inequality
\begin{align*}
\frac{dN(\hat\beta_j,1)}{dN(0,\frac{1}{2})}=e^{-\frac{1}{2}(\beta_j-\hat\beta_j)^2+\beta_j^2}=e^{\frac{1}{2}(\beta_j+\hat\beta_j)^2-\hat\beta_j^2}\geq e^{-\hat\beta_j^2}.
\end{align*}
Then we can continue with the lower bound for (\ref{eq:prior_mass2}) as follows
\begin{align*}
(\ref{eq:prior_mass2})
&\geq \left(\frac{\epsilon_n}{2V(L+1)}\right)^{\hat s-p_L-1} e^{-\sum_{j>T-p_L-1}\hat \beta_j^2} \left(\int_{ -\frac{\epsilon_n}{2V(L+1)}}^{\frac{\epsilon_n}{2V(L+1)}}dN\left(0,\frac{1}{2}\right)\right)^{p_L+1}\\ 
&\geq \left(\frac{\epsilon_n}{2V(L+1)}\right)^{\hat s-p_L-1}e^{- C_n} \left(e^{-(\frac{\epsilon_n}{2V(L+1)})^2} \frac{\epsilon_n}{\sqrt{\pi}V(L+1)} \right)^{p_L+1}\\
&\geq \left(\frac{2}{\sqrt{2\pi}}\right)^{p_{L}+1} \left(\frac{\epsilon_n}{2V(L+1)}\right)^{\hat s} e^{-C_n}e^{-\frac{(p_L+1)\epsilon_n}{4(12pN+1)^{(L+1)}(L+1)}}
\geq e^{-D_2n^{p/(2\alpha+p)}\log^2(n)}
\end{align*}
for some $ D_2>0$ and recall that $C_n= Cn^{p/(2\alpha+p)}\log^{2\delta}(n)$.Thus we can combine the bounds and conclude that $e^{-(D_1+D_2)n^{p/(2\alpha+p)}\log^2(n)}\geq e^{-dn\epsilon_n^2}$ for some $\delta>1$ and $d>D_1+D_2$. The proof is now complete.

\end{proof}

It is worth noting that the same concentration rate still holds if we use $N(0,1)$ prior on {\em all} parameters. We could define 
\[\F_n=\{\norm{\beta}_2^2\leq C_n\}.\]

The prior mass condition in (\ref{eq:prior_mass}) is
\[
\Pi(\F\backslash \F_n)=\prob(\chi_s^2>C_n)\leq e^{-C_1 n^{p/(2\alpha+p)}\log^{2\delta}(n)}.
\]
The entropy condition in (\ref{eq:entropy}) is
\begin{align*}
\sup_{\epsilon>\epsilon_n} \log \mE(\frac{\epsilon}{36}, f\in \F_n;\norm{\cdot}_\infty )&\lesssim \log\left\{\left(\frac{\sqrt{C_n}}{\frac{\epsilon_n/36}{V(L+1)}}\right)^{s}\right\}\\
&\lesssim(s+1) \log\left( \frac{72}{\epsilon_n}(L+1)(12pN+1)^{2(L+1)} \right)+s\log(Cn^{p/(2\alpha+p)}\log^{2\delta}(n))\\
&\lesssim n^{p/(2\alpha+p)}\log(n)\log\big(n/\log^\delta(n)\big)+ n^{p/(2\alpha+p)}\log(n\log (n))\\
&\lesssim n\epsilon_n^2
\end{align*}
for some $\delta>1$, using the fact that $s\lesssim n^{p/(2\alpha+p)}$ and $L\asymp \log(n)$.

The prior concentration condition in (\ref{eq:prior_concentrate}) can be proved by changing (\ref{eq:prior_mass2}) into
\begin{align*}
&\Pi(\beta\in \R^{T}: \gamma(\beta)=\hat \gamma, \sum_{j}\beta_j^2\leq C_n\text{ and } \norm{\beta-\hat \beta}_\infty\leq \frac{\epsilon_n}{2V(L+1)})\\
&\geq  e^{-\sum_{j}\hat \beta_j^2} \left(\int_{ -\frac{\epsilon_n}{2V(L+1)}}^{\frac{\epsilon_n}{2V(L+1)}}dN(0,\frac{1}{2}) \right)^{\hat s}\\ 
&\geq e^{-C_n} \left(e^{-(\frac{\epsilon_n}{2V(L+1)})^2} \frac{\epsilon_n}{\sqrt{\pi}V(L+1)} \right)^{\hat s}\\
&\geq e^{-C_n} \left(\frac{\epsilon_n}{\sqrt{\pi}V(L+1)}\right)^{\hat s} e^{-\frac{\hat s\epsilon_n}{4(12pN+1)^{(L+1)}(L+1)}}
\geq e^{-Dn^{p/(2\alpha+p)}\log^2(n)}\qedhere.
\end{align*}

\begin{theorem}(adaptive priors)\label{thm:adaptive}
Assume $f_0 \in \mH^\alpha_p$, where $p=O(1)$ as $n\to \infty$, $\alpha<p$, and $\norm{f_0}_\infty\leq F$. Let $L \asymp \log(n)$ and assume priors for N and s as in (\ref{eq:N_prior}) and (\ref{eq:s_prior}). Assume the prior of $f$ as  given by (\ref{eq:beta}) and (\ref{eq:gamma}). Then the posterior distribution concentrates at the rate $\xi_n=n^{-\alpha/(2\alpha+p)}\log^\delta(n)$ for $\delta>1$ in the sense that
\[
\Pi(f\in \F(L):\norm{f-f_0}_L>M_n\xi_n\mid \Y^{(n)})\to 0
\]
in $\P_0^n$ probability as $n\to\infty$ for any $M_n\to \infty$.
\end{theorem}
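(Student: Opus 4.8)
The plan is to extend the proof of Theorem \ref{thm:post_concentrate} by re-verifying the three posterior concentration conditions (\ref{eq:entropy})--(\ref{eq:prior_mass}) of \citet{ghosal2007convergence}, now with respect to the adaptive mixture prior. Because the smoothness $\alpha$ is unknown, the sieve cannot depend on $\alpha$; instead I would take the $\alpha$-free sieve $\F_n=\bigcup_{N=1}^{N_n}\bigcup_{s=0}^{s_n}\F(L,\p_N^L,s)$ with $N_n\asymp n\xi_n^2/\log n$ and $s_n\asymp n\xi_n^2$, intersected (as in Theorem \ref{thm:post_concentrate}) with a ball of radius $\sqrt{C_n}$ for the Gaussian top-layer weights. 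Note $n\xi_n^2\asymp n^{p/(2\alpha+p)}\log^{2\delta}(n)$, so both $N_n$ and $s_n$ comfortably exceed the oracle width $\asymp n^{p/(2\alpha+p)}/\log n$ and oracle sparsity $\asymp n^{p/(2\alpha+p)}$ supplied by Lemma \ref{lem:approx_error} and Lemma 5.1 of PR18.

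For the sieve complement mass (\ref{eq:prior_mass}), the new ingredient is controlling the tails of the architecture priors. The geometric prior (\ref{eq:s_prior}) gives $\sum_{s>s_n}\pi(s)\lesssim e^{-\lambda_s s_n}=e^{-\lambda_s n\xi_n^2}$ up to constants. The Poisson-type prior (\ref{eq:N_prior}) decays super-exponentially: by Stirling, $\sum_{N>N_n}\pi(N)\lesssim \lambda^{N_n}/N_n!\asymp e^{-N_n\log N_n+O(N_n)}$, and since $\log N_n\asymp \log n$ this is $e^{-c\,n\xi_n^2}$. Together with the $\chi^2$ tail bound $\prob(\chi^2_{p_L+1}>C_n)\lesssim e^{-C_n/4}2^{(p_L+1)/2}$ obtained by the same argument as in Theorem \ref{thm:post_concentrate}, choosing $C>0$ large makes $\Pi(\F(L)\setminus\F_n)\leq e^{-(d+2)n\xi_n^2}$.

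For the entropy condition (\ref{eq:entropy}), I would cover each shell $\F(L,\p_N^L,s)$ exactly as in Theorem \ref{thm:post_concentrate}, where the log covering number is $\lesssim n^{p/(2\alpha+p)}\log^2(n)$, and then union-bound over the at most $N_n s_n$ architectures in $\F_n$; this contributes only an additive $\log(N_n s_n)\lesssim \log n$ factor that is absorbed, leaving the bound $\lesssim n^{p/(2\alpha+p)}\log^2(n)\lesssim n\xi_n^2$. The prior concentration (\ref{eq:prior_concentrate}) is obtained by restricting attention to the single oracle architecture $(\hat N,\hat s)$: the within-shell lower bound $e^{-Dn^{p/(2\alpha+p)}\log^2(n)}$ already computed in Theorem \ref{thm:post_concentrate} is multiplied by $\pi(\hat N)\pi(\hat s)$, and since $\hat N\log\hat N\asymp n^{p/(2\alpha+p)}$ and $\lambda_s\hat s\asymp n^{p/(2\alpha+p)}$, these factors only cost an extra $e^{-O(n^{p/(2\alpha+p)})}$, which is negligible relative to $e^{-dn\xi_n^2}$.

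The main obstacle is the interplay created by the Poisson prior on $N$: the same $1/N!$ factor that makes the sieve-complement tail decay super-exponentially (hence easy to bound) also threatens to shrink the prior mass $\pi(\hat N)$ on the oracle width. The crux is the verification that $\hat N\log\hat N\asymp n^{p/(2\alpha+p)}$ stays strictly below $n\xi_n^2\asymp n^{p/(2\alpha+p)}\log^{2\delta}(n)$, which is exactly where the extra logarithmic slack in the rate (the exponent $\delta>1$) is consumed; the same $\delta>1$ margin is also what lets the within-shell prior concentration bound $e^{-Dn^{p/(2\alpha+p)}\log^2(n)}$ dominate $e^{-dn\xi_n^2}$.
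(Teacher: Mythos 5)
Your proposal is correct and follows essentially the same route as the paper, which for this theorem gives only a one-line deferral to Theorem 6.2 of PR18: namely, re-verifying the three Ghosal--van der Vaart conditions for the adaptive mixture prior, using the super-exponential Poisson tail on $N$ and the geometric tail on $s$ to control the sieve complement, and restricting to the single oracle architecture (whose prior cost $e^{-O(n^{p/(2\alpha+p)})}$ is negligible against $e^{-dn\xi_n^2}$) for the prior concentration condition. The one bookkeeping caveat is that shells inside the sieve carry sparsity up to $s_n\asymp n\xi_n^2$ rather than the oracle $n^{p/(2\alpha+p)}$, so the per-shell log covering number is really of order $n\xi_n^2\log^2 n$ rather than $n^{p/(2\alpha+p)}\log^2 n$; this costs only an extra logarithmic factor in the rate and is absorbed by enlarging $\delta$, which is exactly the slack the statement's ``for some $\delta>1$'' provides.
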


The proof for Theorem \ref{thm:adaptive} follows the same techniques used in Theorem 6.2 of PR18. And this adaptive results also hold for networks with standard normal priors on all weights.

\subsection{Preparations for Main Theorems}
The general framework for first-order approximation of functionals is as follows
\begin{theorem}\label{thm:first_order} \citep{castillo2015bernstein} Consider the  model $\P^n_0 $, a real-valued functional $f\to \Psi(f)$ and $\expect{\cdot, \cdot}_L,\Psi_0^{(1)}, W_n, $ as defined above. Suppose that (\ref{eq:rn}) is satisfied, and denote
\[
\hat\Psi= \Psi(f_0)+\frac{W_n(\Psi_0^{(1)})}{\sqrt n}, \qquad V_{0}=\norm{\Psi_0^{(1)}}_L^2.
\]
Let $\Pi$ be a prior distribution on $f$. Let $A_n$ be any measurable set such that 
\[
\Pi(A_n\mid \Y^{(n)})=1+o_P(1), \, \text{ as } n\to \infty.
\]
Then for any real $t$ with $f_t$ as 
\begin{equation*}
f_t=f-\frac{t\Psi_0^{(1)}}{\sqrt n},
\end{equation*}
we could write
\begin{equation*}
\E^\Pi[e^{t\sqrt n (\Psi(f)-\hat\Psi)}\mid  \Y^{(n)}, A_n]=e^{o_P(1)+t^2 V_{0}/2}\frac{\int_{A_n}e^{\ell_n(f_t)-\ell_n(f_0)d\Pi(f)}}{\int_{A_n}e^{\ell_n(f)-\ell_n(f_0)d\Pi(f)}}.
\end{equation*}
Moreover, if
\begin{equation}\label{eq:ft_ratio}
\frac{\int_{A_n}e^{\ell_n(f_t)-\ell_n(f_0)d\Pi(f)}}{\int_{A_n}e^{\ell_n(f)-\ell_n(f_0)d\Pi(f)}}=1+o_P(1), \, \forall t\in \R
\end{equation}
is satisfied, then the posterior distribution of $\sqrt n(\Psi(f)-\hat \Psi)$ is asymptotically normal and mean-zero, with variance $V_0$.
\end{theorem}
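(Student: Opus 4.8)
The plan is to compute the conditional posterior Laplace transform of $\sqrt n(\Psi(f)-\hat\Psi)$ on $A_n$ directly, and to recognize via a completion of the square that the Cameron--Martin shift $f\mapsto f_t$ reproduces the linear part of $\Psi$ together with a deterministic quadratic correction. Writing the conditional posterior expectation as a ratio of integrals,
\[
\E^\Pi\big[e^{t\sqrt n(\Psi(f)-\hat\Psi)}\mid \Y^{(n)}, A_n\big]=\frac{\int_{A_n} e^{t\sqrt n(\Psi(f)-\hat\Psi)}\,e^{\ell_n(f)-\ell_n(f_0)}\,d\Pi(f)}{\int_{A_n} e^{\ell_n(f)-\ell_n(f_0)}\,d\Pi(f)},
\]
I would first substitute the first-order expansion $\Psi(f)=\Psi(f_0)+\expect{\Psi_0^{(1)},f-f_0}_L+r(f,f_0)$ together with the definition of $\hat\Psi$ to obtain $t\sqrt n(\Psi(f)-\hat\Psi)=t\sqrt n\expect{\Psi_0^{(1)},f-f_0}_L-t\,W_n(\Psi_0^{(1)})+t\sqrt n\,r(f,f_0)$.

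The decisive step is the LAN identity for the shifted log-likelihood. Expanding $\ell_n(f_t)-\ell_n(f_0)=-\tfrac n2\norm{f_t-f_0}_L^2+\sqrt n\,W_n(f_t-f_0)$ with $f_t-f_0=(f-f_0)-t\Psi_0^{(1)}/\sqrt n$, and using bilinearity of $\expect{\cdot,\cdot}_L$ together with linearity of $W_n$, the cross terms collect into
\[
\ell_n(f_t)-\ell_n(f_0)=\big[\ell_n(f)-\ell_n(f_0)\big]+t\sqrt n\expect{\Psi_0^{(1)},f-f_0}_L-\tfrac{t^2}{2}\norm{\Psi_0^{(1)}}_L^2-t\,W_n(\Psi_0^{(1)}).
\]
Since $V_0=\norm{\Psi_0^{(1)}}_L^2$, comparing the two displays produces the pointwise identity $e^{t\sqrt n(\Psi(f)-\hat\Psi)}e^{\ell_n(f)-\ell_n(f_0)}=e^{t^2V_0/2}\,e^{t\sqrt n\,r(f,f_0)}\,e^{\ell_n(f_t)-\ell_n(f_0)}$. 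Substituting this into the numerator, pulling out the constant $e^{t^2V_0/2}$, and absorbing $e^{t\sqrt n\,r(f,f_0)}=e^{o_P(1)}$ uniformly over $A_n$ by assumption (\ref{eq:rn}), yields the claimed representation. For the second assertion I would invoke (\ref{eq:ft_ratio}) to replace the remaining ratio of integrals by $1+o_P(1)$, so that the conditional Laplace transform converges in $\P_0^n$-probability to $e^{t^2V_0/2}$ for every $t\in\R$; because $\Pi(A_n\mid\Y^{(n)})=1+o_P(1)$, the conditional and unconditional posteriors of $\tau_n$ are asymptotically indistinguishable in $\beta$, so the same limit holds for the full posterior.

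The step I expect to be the main obstacle is the passage from this in-probability convergence of moment generating functions to weak convergence of the posterior in the bounded Lipschitz metric $\beta$. This does not follow from the classical continuity theorem verbatim, since the convergence holds only in probability rather than surely. I would handle it by a subsequence argument: along any subsequence, extract a further subsequence along which $\E^\Pi[e^{t\tau_n}\mid\Y^{(n)}]\to e^{t^2V_0/2}$ almost surely, simultaneously for all $t$ in a countable dense set; pathwise, the continuity theorem for Laplace transforms then forces weak convergence to $\mN(0,V_0)$, whence $\beta(\Pi[\cdot\mid\Y^{(n)}]\circ\tau_n^{-1},\mN(0,V_0))\to0$ along that sub-subsequence. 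Since every subsequence admits such a further subsequence, the bounded Lipschitz distance tends to zero in probability, which is precisely (\ref{eq:asy_normal}). The tail control needed for this step is automatic: the limiting transform $e^{t^2V_0/2}$ is finite for all $t$, which guarantees the uniform integrability behind the continuity theorem and identifies the Gaussian limit uniquely by its moments.
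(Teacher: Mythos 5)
Your proposal is correct, but it is worth noting that the paper does not actually prove this theorem: its entire ``proof'' is the single line ``Set $R_n(\cdot,\cdot)=0$, $\Psi_0^{(2)}=0$, $\mu_n=0$ in Theorem 2.1 of Castillo and Rousseau (2015).'' What you have written is, in effect, a self-contained reconstruction of that cited argument specialized to the no-second-order, no-drift case, and your computation checks out: the LAN expansion $\ell_n(f_t)-\ell_n(f_0)=\ell_n(f)-\ell_n(f_0)+t\sqrt n\expect{\Psi_0^{(1)},f-f_0}_L-\tfrac{t^2}{2}\norm{\Psi_0^{(1)}}_L^2-tW_n(\Psi_0^{(1)})$ is exact for the Gaussian regression likelihood (not merely an approximation), so the pointwise identity $e^{t\sqrt n(\Psi(f)-\hat\Psi)}e^{\ell_n(f)-\ell_n(f_0)}=e^{t^2V_0/2}e^{t\sqrt n\,r(f,f_0)}e^{\ell_n(f_t)-\ell_n(f_0)}$ holds with no error term, and (\ref{eq:rn}) lets you pull out $e^{o_P(1)}$ uniformly over $A_n$, giving the stated representation. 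Your handling of the one genuinely delicate step --- upgrading pointwise-in-$t$ convergence in probability of the conditional Laplace transform to weak convergence in the bounded Lipschitz metric --- via a diagonal subsequence argument over a countable dense set of $t$ (using convexity of moment generating functions and moment-determinacy of the Gaussian to get pathwise weak convergence along sub-subsequences) is the standard and correct resolution; the paper itself only asserts this implication by citation in the proof of its Theorem 3.1. The final reduction from the conditional posterior on $A_n$ to the full posterior via $\Pi(A_n\mid\Y^{(n)})=1+o_P(1)$ and domination of $\beta$ by total variation is also fine. In short: same underlying argument as the source the paper defers to, carried out in full rather than cited.
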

\begin{proof}
Set $R_n(\cdot, \cdot)=0, \Psi_0^{(2)}=0,\mu_n=0$ in Theorem 2.1 of \citet{castillo2015bernstein}.
\end{proof}

\paragraph{Projection of Functions} The intuition of our projection conditional on $(\gamma, Z)$ is to maintain the same partitions for the shifted function in (\ref{eq:c_o_m}) and perform the \textit{change of measure} locally.  We first give the notation for $Z^L$, which are the nodes in the top layer. Let $Z_{Lj}, j=1, \ldots, p_L$ denote the $j^{th}$ node in $L^{th}$ layer, which can be written as a sum of local linear functions, respectively:
\[
Z_{Lj}(\x)=\sum_{k=1}^{K_L} \1(\x \in \Omega_{k}^j)\{ \tilde \beta_k^{j'} \x+\tilde\alpha_k^j \}
\]
here the partitions $\{\Omega_k^j\}_{k=1}^{K_L}$ and coefficients $\{\tilde\beta_k^j, \tilde\alpha_k^j\}_{k=1}^{K_L}$ are determined by $\{W_l, b_l\}_{l=1}^L$.

For simplicity of notation, we denote $W_{L+1}=(w_1, \ldots, w_{p_L})'$. Then the output can be written as:
\begin{align*}
\displaystyle
f(\x)&=\sum_{j=1}^{p_L} w_j Z_{Lj}(\x)+b_{L+1}\\
&=\sum_{k_1=1}^{K_L}\cdots\sum_{k_{p_L}=1}^{K_L}\1\left(\x \in \bigcap_{j=1}^{p_L} \Omega_{k_j}^j\right) \left\{ \left(\sum_{j=1}^{p_L} w_j \tilde\beta_{k_j}^{j'} \right) \x +\left(\sum_{j=1}^{p_L} w_j \tilde \alpha_{k_j}^j+b_{L+1}\right)  \right\}.
\end{align*}

We denote the projection of function $a(\x)$ conditional on $\{W_l, b_l\}_{l=1}^L$ with $a^\gamma_{[Z]}$, since conditional on $\{W_l, b_l\}_{l=1}^L$ is equivalent to conditional on $(\gamma, Z)$:
\begin{align*}
(W^a, b^a)&={\arg\min}_{W_{L+1}, b_{L+1}\in \F_n(L,\p,\gamma,Z)}\norm{ WZ_L(\x)+b - a(\x)}_L,\\
a^\gamma_{[Z]} (\x)&=W^aZ_L(\x)+b^a.
\end{align*}
The projection $a^\gamma_{[Z]}$ can also be viewed as the best approximation to $a$ conditional on $(\gamma, Z)$.

Similarly, we denote projection of $f_0$ onto $\{W_l, b_l\}_{l=1}^L$ as $f^\gamma_{0[Z]}$:
\begin{align}
(W^0, b^0)&={\arg\min}_{W_{L+1}, b_{L+1}\in \F_n(L,\p,\gamma,Z)} \norm{ WZ_L(\x)+b - f_0(\x)}_L,\label{eq:W0b0}\\
f^\gamma_{0[Z]}(\x)&=W^0Z_L(\x)+b^0\label{eq:f_proj}.
\end{align}
Note that $f \in \{WZ_L(\x)+b: W \in \R^{p_L}, b\in \R\}$, so naturally we have $\norm{f^\gamma_{0[Z]}-f_0}_L\leq  \norm{f-f_0}_L$.

\subsection{Proof of Theorem \ref{thm:linear}}\label{thm:proof_linear}
We will perform the analysis locally on the sets $A_n\equiv A_n^{M_n}$ from \eqref{eq:An} for some $M_n\rightarrow\infty$. We use the fact that convergence of Laplace transforms for all $t$ in probability implies convergence in distribution in probability \citep{castillo2015bernstein}.
The posterior decomposes into a mixture of laws with weights $\Pi(\gamma\mid  \Y^{(n)})$, where $\gamma$ is the vector encoding the connectivity pattern with prior in (\ref{eq:gamma_prior}).
We denote  with $I_{n,\gamma}= \E^\Pi[e^{t\sqrt n (\Psi(f)-\hat\Psi)}\mid \Y^{(n)}, A_{n},\gamma]$ and write
\begin{align*}
I_{n}:&=\E^\Pi[e^{t\sqrt n (\Psi(f)-\hat\Psi)}\mid \Y^{(n)}, A_{n}]
= \sum_{\gamma \in \mV^{\p,s}}  \Pi(\gamma\mid \Y^{(n)}, A_{n}) I_{n,\gamma}.
\end{align*}
Next, we want to show that on the event $A_n$ and uniformly for all $ \gamma  \in \mV^{\p,s}$
\[
I_{n,\gamma} =e^{o_P(1)+t^2V_0/2}(1+o(1))\quad \text{ as } n\to\infty
\]
so that $I_n=e^{o_P(1)+t^2V_0/2}(1+o(1))$.

We choose $\gamma$ such that $ \F(L,\p,\gamma)\cap A_n\neq\emptyset$ and for $f\in \F(L,\p,\gamma)\cap A_n$ we  expand the linear functional  as
$\Psi(f)-\Psi(f_0)=\expect{a, f-f_0}_L$ which yields
\begin{align*}
\Psi_{0}^{(1)}&=a,\\
r(f,f_0)&=0.
\end{align*}
The remainder condition \eqref{eq:rn} is thus trivially satisfied.
To verify the second condition \eqref{eq:c_o_m}, we choose the shifted function $f_t$ as
\[
f_t=f-\frac{t a}{\sqrt n}.
\]
Due to the fact that our  class of neural networks has a top linear layer, the function $f_t$ shares the same deep connectivity structure as $f$ where only the top layer intercepts $b_{L+1}^t$ have been shifted.
The {\em change of measure} thus only influences $b_{L+1}$ where $b^t_{L+1}=b_{L+1}-\frac{t a}{\sqrt n}$. Next, we can write
\begin{align}\label{eq:I_n}
I_{n,\gamma}&=e^{\frac{t^2}{2}\norm{a}^2_L} \times \frac{\int_{A_n}e^{\ell_n(f_t)-\ell_n(f_0)}d\Pi(f\mid \gamma)}{ \int_{A_n}e^{\ell_n(f)-\ell_n(f_0)}d\Pi(f\mid \gamma)}\\
&=e^{\frac{t^2}{2}\norm{a}^2_L} \times \frac{\int_{f_t+ \frac{t a}{\sqrt n}  \in A_n}e^{\ell_n(f_t)-\ell_n(f_0)}d\Pi(f_t\mid \gamma)\frac{d\Pi(f\mid \gamma)}{d\Pi(f_t\mid \gamma)}}{ \int_{A_n}e^{\ell_n(f)-\ell_n(f_0)}d\Pi(f\mid \gamma)}.
\end{align}

Next, we show that the ratio above converges to $1$ as $n\to \infty$. We have
 \begin{align*}
 &\frac{d\Pi(f\mid \gamma)}{d\Pi(f_t\mid\gamma)}=\frac{ d\Pi(\{W_l,b_l\}_{l=1}^L, W_{L+1}, b_{L+1}\mid\gamma ) }{d\Pi(\{W_l,b_l\}_{l=1}^L, W_{L+1}, b^t_{L+1}\mid\gamma)}=\frac{ d\Pi(\{W_l,b_l\}_{l=1}^L\mid\gamma )d\Pi(W_{L+1})d\Pi( b_{L+1}) }{d\Pi(\{W_l,b_l\}_{l=1}^L\mid\gamma )\Pi(W_{L+1})\Pi( b^t_{L+1})}=\frac{d\Pi( b_{L+1})}{d\Pi( b^t_{L+1})}\\
& \frac{d\Pi( b_{L+1})}{d\Pi( b^t_{L+1})} =\frac{\phi( b_{L+1})}{\phi( b_{L+1}-\frac{ta}{\sqrt n})}=\exp \left\{-\frac{1}{2}\left[b_{L+1}^2-( b_{L+1}-\frac{ta}{\sqrt n})^2\right]\right\}=
\exp\left(-\frac{at b_{L+1}}{\sqrt n}+\frac{t^2a^2}{2n}\right)
 \end{align*}
Next, we note  (from the definition of the sieve $\mathcal F_n$ and $C_n$ in the proof of Theorem \ref{thm:post_concentrate}) 
 \[\frac{|b_{L+1}|}{\sqrt n}\leq \frac{\sqrt{C_n}}{\sqrt n}\lesssim n^{-\frac{\alpha}{2\alpha+p}\log^{\delta}(n)}.\]

Going back to \eqref{eq:I_n}, we now have for some $c>0$
\begin{multline}\label{eq:sandwich}
e^{-c\,n^{-\frac{\alpha}{2\alpha+p}}\log^{\delta}(n)+\frac{t^2a^2}{2n} + \frac{t^2}{2}\norm{a}^2_L}\times\frac{\Pi\left({f+\frac{ta}{\sqrt n}\in A_n } \mid \Y^{(n)},\gamma\right)}{\Pi\left({f\in A_n} \mid \Y^{(n)},\gamma\right)}
\leq I_{n,\gamma}\\
\leq e^{c\,n^{-\frac{\alpha}{2\alpha+p}}\log^{\delta}(n)+\frac{t^2a^2}{2n}+\frac{t^2}{2}\norm{a}^2_L}\times\frac{\Pi\left({f+\frac{ta}{\sqrt n}\in A_n } \mid \Y^{(n)},\gamma\right)}{\Pi\left({f\in A_n} \mid \Y^{(n)},\gamma\right)}.
\end{multline}

Next, from 
\[
 \norm{f -f_0}_L - \norm{\frac{t a}{\sqrt n}}_L \leq \norm{f+\frac{t a}{\sqrt n} -f_0}_L \leq \norm{f -f_0}_L + \norm{\frac{t a}{\sqrt n}}_L
\]
it is clear that 
\[
\left\{f: \norm{f -f_0}_L \leq M_n\xi_n-\norm{\frac{t a}{\sqrt n}}_L\right\}\subset \left\{f: \norm{f+\frac{t a}{\sqrt n} -f_0}_L\leq M_n\xi_n\right\} \subset \left\{f: \norm{f -f_0}_L \leq M_n\xi_n+\norm{\frac{t a}{\sqrt n}}_L\right\}
\]
This yields 
\begin{multline*}
\Pi\left(f:\norm{f -f_0}_L \leq \xi_n-\norm{\frac{t a}{\sqrt n}}_L \mid \Y^{(n)},\gamma\right)\leq \Pi\left(f:{f+\frac{ta}{\sqrt n}\in A_n} \mid \Y^{(n)},\gamma\right)\\ \leq \Pi\left(f:\norm{f -f_0}_L \leq \xi_n+\norm{\frac{t a}{\sqrt n}}_L \mid \Y^{(n)},\gamma\right).
\end{multline*}
Since the concentration rate is slower than $1/\sqrt{n}$, i.e.  $\xi_n=n^{-\alpha/(2\alpha+p)}\log^\delta (n)\gtrsim n^{-1/2}$, 
 we have $\Pi(f+ \frac{t a}{\sqrt n}  \in A_n) \to \Pi(f\in A_n)$, as $n\to \infty$.  From the sandwich inequality \eqref{eq:sandwich}, we have $I_{n,\gamma}\rightarrow e^{\frac{t^2\norm{a}_L^2}{2}}$  for any $t\in\R$ as $n\rightarrow\infty$.

\subsection{Proof of Theorem \ref{thm:L2}}\label{thm:proof_L2}

Similar to the linear functional case, the posterior decomposes into a mixture of laws with weights $\Pi(\gamma\mid  \Y^{(n)})$, where $\gamma$ is the vector encoding the connectivity pattern with a prior in (\ref{eq:gamma_prior}). We can write
\begin{align}
I_{n}:&=\E^\Pi[e^{t\sqrt n (\Psi(f)-\hat\Psi)}\mid \Y^{(n)}, A_{n}]=\sum_{\gamma \in \mV^{\p,s}} \Pi(\gamma\mid \Y^{(n)}, A_{n}) I_{n,\gamma} \label{eq:In}
\end{align}
where
\begin{align*}
I_{n,\gamma} &:= \E^\Pi[e^{t\sqrt n (\Psi(f)-\hat\Psi)}\mid \Y^{(n)}, A_{n},\gamma].\\
\end{align*}
We further decompose each $I_{n,\gamma}$ by conditioning on the deep weights  $\{W_l, b_l\}_{l=1}^L$.
We can write
\begin{align*}
\Pi(\{W_l, b_l\}_{l=1}^{L+1}\mid \Y^{(n)} , A_n,\gamma) & =\Pi(W_{L+1}, b_{L+1}\mid \{W_l, b_l\}_{l=1}^{L} ,\Y^{(n)} , A_n,\gamma) \Pi( \{W_l, b_l\}_{l=1}^{L} \mid \Y^{(n)} , A_n,\gamma)\\
& = \Pi(W_{L+1}, b_{L+1}\mid \Y^{(n)} , A_n,\gamma, Z) \Pi( Z \mid \Y^{(n)} , A_n,\gamma),
\end{align*}
since $Z=\{Z_l\}_{l=1}^L$ is fully determined by $\{W_l, b_l\}_{l=1}^{L}$ and we can thereby replace conditioning on $\{W_l, b_l\}_{l=1}^{L}$ by conditioning on $Z$. 
We can further dissect $I_{n,\gamma}$ by conditioning on $Z$ 
\begin{align*}
I_{n,\gamma} =\int I_{n,\gamma}^Z d \Pi( Z \mid \Y^{(n)} , A_n,\gamma),\quad\text{where}\quad
I_{n,\gamma}^Z:= \int e^{t\sqrt n (\Psi(f)-\hat\Psi)} d \Pi(W_{L+1}, b_{L+1}\mid \Y^{(n)} , A_n,\gamma, Z ).
\end{align*}

In the rest of the proof, we show that $I_{n,\gamma}^Z \to \exp(-t^2V_0/2)$ uniformly for all $\gamma$ and $Z$ such that $f\in A_n$.  This can be done in two steps. First, we show that conditional on $(\Y^{(n)}, A_{n},\gamma,Z)$, $\Psi(f)$ asymptotically centers at a local $(\gamma, Z)$-dependent centering point $\hat \Psi^\gamma_Z$ with a local $(\gamma, Z)$-dependent variance $V^\gamma_Z$ (both defined later). In the second step, we show that the local centering points $\hat \Psi^\gamma_Z$  are close to  the global centering point $\hat \Psi$  and  that the local variances $V^\gamma_Z$ converge to $ V_0$ uniformly for all $\gamma$ and $Z$ such that $f\in A_n$.

We define the $(\gamma, Z)$-dependent local centering point and variance  as 
\begin{equation}\label{eq:quad_local}
\hat \Psi^\gamma_Z=\Psi (f_0)+\frac{W_n(2f^\gamma_{0[Z]})}{\sqrt n} \,\quad\text{and}\quad V^\gamma_Z=4\norm{f^\gamma_{0[Z]}}_L^2,
\end{equation}
where $f^\gamma_{0[Z]}$ is the $\|\cdot\|_L$ projection of $f_0$ on the set of deep learning networks $f$ with a connectivity pattern $\gamma$ and hidden nodes $Z$  defined in (\ref{eq:f_proj}).

For any $f\in \F(L,\p, \gamma)$, the squared $L^2$-norm functional can be expanded as
\begin{align*}
\Psi(f)-\Psi(f_0)&=2\expect{f_0, f-f_0}_L+\norm{f-f_0}^2_L\\
&=2\expect{f^\gamma_{0[Z]}, f-f_0}_L+\norm{f-f_0}^2_L+2\expect{f_0-f^\gamma_{0[Z]}, f-f_0}_L.
\end{align*} 
Note that   $\norm{f^\gamma_{0[Z]}-f_0}_L\leq \norm{f-f_0}_L$ for any $f$ which has a connectivity pattern $\gamma$ and hidden nodes $Z$.

This expansion yields the first-order and remainder terms 
\begin{align*}
\Psi_{0}^{(1)}&=2f^\gamma_{0[Z]},\\
r(f,f_0)&=\norm{f-f_0}^2_L+2\expect{f_0-f^\gamma_{0[Z]}, f-f_0}_L.
\end{align*}

To ensure asymptotical normality of $\Psi(f)$, we first need to ensure the local shape condition in (\ref{eq:rn}).
Assuming that the  smoothness $\alpha$ satisfies
\begin{equation}\label{eq:alpha_cond}
\alpha>p/2
\end{equation}
we have for $f\in A_n$ with a connectivity $\gamma$ and hidden nodes $Z$
\begin{align*}
r(f,f_0)&=\norm{f-f_0}^2_L+2\expect{f_0-f^\gamma_{0[Z]}, f-f_0}_L\\
& \leq 2\norm{f-f_0}^2_L+\norm{f_0-f^\gamma_{0[Z]}}^2_L\\
& \leq 3\norm{f-f_0}^2_L\lesssim \xi_n^2=n^{-\frac{2\alpha}{2\alpha+p}}\log^{2\delta} =o\left(\frac{1}{\sqrt n}\right).
\end{align*}

Next, to verify the second sufficient condition \eqref{eq:c_o_m} we define the shifted function $f_t$ as
\[
f_t=f-\frac{2tf^\gamma_{0[Z]}}{\sqrt n}.
\]

Then we use the  local centering point $\hat \Psi_{Z}^\gamma$ in \eqref{eq:quad_local} to define
\begin{align}
\tilde I_{n,\gamma}^Z:=&\E^\Pi[e^{t\sqrt n (\Psi(f)-\hat\Psi^\gamma_Z)}\mid \Y^{(n)}, A_{n},\gamma, Z]\label{eq:In_tilde}\\
=&e^{2{t^2}\norm{f^\gamma_{0[Z]}}^2_L} \times \frac{\int_{A_n}e^{\ell_n(f_t)-\ell_n(f_0)}d\Pi(f\mid \gamma,Z)}{ \int_{A_n}e^{\ell_n(f)-\ell_n(f_0)}d\Pi(f\mid \gamma, Z)}\nonumber\\
=&e^{2{t^2}\norm{f^\gamma_{0[Z]}}^2_L} \times \frac{\int_{f_t+\frac{2t f^\gamma_{0[Z]}}{\sqrt n} \in A_n}e^{\ell_n(f_t)-\ell_n(f_0)}d\Pi(f_t\mid \gamma,Z)\frac{d\Pi(f\mid \gamma, Z)}{d\Pi(f_t\mid \gamma, Z)}} { \int_{A_n}e^{\ell_n(f)-\ell_n(f_0)}d\Pi(f\mid \gamma, Z)}\nonumber
\end{align}

{
 For simplicity of notation, we first denote $\zeta=(W_{L+1}, b_{L+1})'\in\R^{p_L+1}$ and  $\zeta^t=(W_{L+1}^t, b_{L+1}^t)'\in \R^{p_L+1}$ and $\Delta=(W^0, b^0)'$ as defined  in (\ref{eq:W0b0}).
 Then we can simply write $\zeta^t=\zeta-\frac{2t}{\sqrt n}\Delta$.

Since all parameters are  a-priori independent and there is no sparsity structure placed on $\{W_{L+1}, b_{L+1}\}$, the prior ratio   $\frac{d\Pi(f\mid \gamma,Z)}{d\Pi(f_t\mid \gamma,Z)}$ can be calculated as
\begin{align*}
\frac{d\Pi(f\mid \gamma, Z)}{d\Pi(f_t\mid \gamma, Z)}&=\frac{d\Pi(W_{L+1})}{d\Pi(W_{L+1}^t)}\frac{d\Pi(b_{L+1})}{d\Pi(b_{L+1}^t)}=\frac{d\Pi(\zeta)}{d\Pi(\zeta^t)}\\
&=\prod_{i=1}^{p_{L+1}}\exp\left\{-\frac{1}{2}\left[\zeta^2-(\zeta_i-\frac{2t}{\sqrt n}\Delta_i)^2\right]\right\}\\
&=\exp\left\{\sum_{i=1}^{p_{L+1}} \left[-\zeta_i\frac{\Delta_i t}{\sqrt n} + \frac{2t^2\Delta_i^2}{n} \right]\right\}.
\end{align*}

Similar to our previous proof, we have under the assumption $\alpha>p/2$ 
\begin{equation}\label{eq:c_o_m_quad}
\abs{\sum_{i=1}^{p_{L+1}} \zeta_i\frac{\Delta_i t}{\sqrt n}}\leq \frac{t}{\sqrt n}\norm{\zeta}_2\norm{\Delta}_2\lesssim \frac{C_n}{\sqrt n}=o(1),
\end{equation}
where we used the fact that  both $f$ and $f_{0[Z]}^\gamma$ are contained in $A_n$  and thereby have their top coefficients contained in a ball of radius  $\sqrt{C_n}$
  (recall the definition of $C_n$ in the proof of Theorem \ref{thm:post_concentrate}).


Now, using the fact that \begin{align*}
\norm{f-f_0}_L-2\norm{\frac{t f^\gamma_{0[Z]}}{\sqrt n}}_L\leq \norm{f+ \frac{2t f^\gamma_{0[Z]}}{\sqrt n}-f_0}_L\leq \norm{f-f_0}_L+2\norm{\frac{t f^\gamma_{0[Z]}}{\sqrt n}}_L
\end{align*}
we have
\begin{multline*}
\Pi\left(f: \norm{f -f_0}_L \leq \xi_n-2\norm{\frac{t f^\gamma_{0[Z]}}{\sqrt n}}_L\mid \Y^{(n)},\gamma, Z \right) \\ \leq \Pi\left({f+\frac{2t f^\gamma_{0[Z]}}{\sqrt n} \in A_n} \mid \Y^{(n)},\gamma, Z \right) \leq \Pi\left(f: \norm{f -f_0}_L  \leq \xi_n+2\norm{\frac{t f^\gamma_{0[Z]}}{\sqrt n}}_L\mid \Y^{(n)},\gamma, Z \right).
\end{multline*}

Again, since the concentration rate is slower than $1/\sqrt{n}$, i.e.  $\xi_n=n^{-\alpha/(2\alpha+p)}\log^\delta (n)\gtrsim n^{-1/2}$, we have
\begin{equation}\label{eq:quad_set_ratio}
\frac{\Pi(f+ \frac{2t f^\gamma_{0[Z]}}{\sqrt n} \in A_n\mid \Y^{(n)},\gamma, Z)}{ \Pi( A_n\mid \Y^{(n)},\gamma, Z)} \to 1,\, \forall t\in\R.
\end{equation}
Hence, with (\ref{eq:alpha_cond}), (\ref{eq:c_o_m_quad}) and (\ref{eq:quad_set_ratio}), one  concludes $\tilde I^Z_{n,\gamma}\to e^{2t^2\norm{f^\gamma_{0[Z]}}^2_L}$ as $n\to\infty$ using a similar sandwich inequality in (\ref{eq:sandwich}).} In other words, we have
\begin{equation}\label{eq:quad_local_In}
\tilde I_{n,\gamma}^Z=e^{t^2 V^\gamma_Z/2}(1+o(1)).
\end{equation}

Recall the definition of a local centering point $\hat \Psi_Z^\gamma$ and a local variance $V_Z^\gamma$ in (\ref{eq:quad_local}). Then we can write
\begin{align*}
I_{n,\gamma}^Z&=\E^\Pi[e^{t\sqrt n (\Psi(f)-\hat \Psi)}\mid \Y^{(n)}, A_n,\gamma, Z] \\
&=\E^\Pi[e^{t\sqrt n [(\Psi(f)-\hat \Psi_Z^\gamma)+ (\hat \Psi_Z^\gamma - \hat \Psi)]}\mid \Y^{(n)}, A_n,\gamma, Z] \\
&=\tilde I_{n,\gamma}^Z\times e^{t\sqrt n(\hat \Psi^\gamma_Z-\hat\Psi)}\\
&=(1+o(1)) e^{t^2 V^\gamma_Z/2+t\sqrt n(\hat \Psi^\gamma_Z-\hat\Psi)}\\
&=(1+o(1)) e^{t^2 V_0/2 + t^2(V^\gamma_Z-V_0)/2+t\sqrt n(\hat \Psi^\gamma_Z-\hat\Psi)}.
\end{align*}
The proof will be complete once we show the following condition uniformly for all $\gamma$ such that $f\in A_n$
\begin{align*}
I_{n,\gamma}&=\int I_{n,\gamma}^Z d\Pi(Z \mid \Y^{(n)} , A_n,\gamma)  \\
&=(1+o(1)) e^{t^2V_0/2} \int   e^{t^2(V^\gamma_Z-V_0)/2+t\sqrt n(\hat \Psi^\gamma_Z-\hat\Psi)} d\Pi(Z \mid \Y^{(n)} , A_n,\gamma) \to e^{t^2V_0/2}, \text{ as } n\to\infty.
\end{align*}

This is equivalent to showing
\begin{align}\label{eq:local_converge}
 \int  e^{t^2(V^\gamma_Z-V_0)/2+t\sqrt n(\hat \Psi^\gamma_Z-\hat\Psi)} d\Pi(Z \mid \Y^{(n)} , A_n,\gamma) = 1+o_P(1).
 \end{align}

Since we work conditionally on the set $A_n$, we have $\|f^\gamma_{0[Z]}-f_0\|_L\lesssim \xi_n$ and thereby
\begin{align*} 
\sqrt n (\hat \Psi- \hat\Psi^\gamma_Z )&=W_n(f^\gamma_{0[Z]}-f_0)=o_P(1),\\
\abs{V^\gamma_z-V}&=4\abs{\norm{f^\gamma_{0[Z]}}_L^2-\norm{f_0}_L^2} \\
&\lesssim  2\norm{f_0}_L\norm{f^\gamma_{0[Z]}-f_0}_L+ \norm{f^\gamma_{0[Z]}-f_0}_L^2\\
&\lesssim \norm{f^\gamma_{0[Z]}-f_0}_L\leq \xi_n
\end{align*}
under the assumption that $\norm{f_0}_L\leq F$.

Using the smoothness assumption (\ref{eq:alpha_cond}), we have $\xi_n^2=o\left(\frac{1}{\sqrt n}\right)$. We can bound the integral  in \eqref{eq:local_converge} using the uniform bounds on $\sqrt n (\hat \Psi- \hat\Psi^\gamma_Z )$ and $\abs{V^\gamma_z-V}$ as
\begin{align*}
(\ref{eq:local_converge})=& \int e^{t^2 \xi_n/2 + t\times o_P(1) } d\Pi(Z \mid \Y^{(n)} , A_n,\gamma) \\
=& e^{t^2 \xi_n/2 + t\times o_P(1) }=e^{o_P(1)}=1+o_P(1).
 \end{align*}

Putting the pieces together, we write $I_n$ from \eqref{eq:In} as
\[
I_n= \sum_{\gamma \in \mV^{\p,s}} \Pi(\gamma\mid \Y^{(n)}, A_{n}) I_{n,\gamma}=\sum_{\gamma \in \mV^{\p, \gamma}}  \Pi(\gamma\mid \Y^{(n)}, A_{n})e^{t^2V_0/2}(1+o_P(1))=e^{t^2V_0/2}(1+o_P(1))
\]
which completes the proof.\qed

\subsection{Proof of Theorem  \ref{thm:adaptive_bvm}}\label{thm:proof_adaptive}
For our proof for Theorem \ref{thm:adaptive_bvm}, the analysis is locally conducted on the set 
\begin{equation}\label{eq:An2}
A_n^M=\{f\in \F(L):\norm{f-f_0}_L\leq M_n\xi_n\}
\end{equation}
with $\xi_n=n^{-\alpha/(2\alpha+p)}\log^\delta(n)$ for some $M>0$ and $\delta>0$. And from the results in Theorem \ref{thm:adaptive}, we know $\Pi(A_n^M\mid Y^{(n)})=1+o_p(1)$ for any $M_n\to \infty$.

Conditioning on $A_n$ in (\ref{eq:An2}), the posterior consists of a mixture of laws conditional on $N, s$ and $\gamma$
\begin{align*}
I_n&=\E^\Pi[e^{t\sqrt n (\Psi(f)-\hat\Psi)}\mid \Y^{(n)}, A_n]\\
&=\sum_{N=1}^\infty  \Pi(N\mid \Y^{(n)}, A_n) \sum_{s=1}^T\Pi(s\mid \Y^{(n)}, A_n,N)  \sum_{\gamma \in  \mV^{\p,s}} \Pi(\gamma\mid \Y^{(n)}, A_n,N,s) I_{n,s,\gamma}\\
&=\sum_{N=1}^{N_n} \Pi(N\mid \Y^{(n)}, A_n)\sum_{s=1}^{s_n} \pi(s\mid \Y^{(n)}, A_n, N)  \sum_{\gamma \in  \mV^{\p,s}} \Pi(\gamma\mid \Y^{(n)}, A_n, N,s)  I_{n,s,\gamma} +o_p(1) 
\end{align*}
where we denote with
\[
I_{n,s,\gamma}=\E^\Pi[e^{t\sqrt n (\Psi(f)-\hat\Psi)}\mid \Y^{(n)},A_n,N,s,\gamma].
\]
The  second equality follows from the fact that $\Pi(N>N_n\mid \Y^{(n)})\to 0$ and $\Pi(s>s_n\mid \Y^{(n)})\to 0$ in $\P_0^n$ probability as $n\to \infty$, using  Corollary 6.1 of \citet{polson2018posterior}. Thereby the set $A_n$ eventually excludes all the deep learning mappings outside the sieve.

\paragraph{Linear functionals} For $\Psi(f)=\expect{a, f}_L$, when $a(\cdot)$ is a constant function, following the same strategy as in the proof of Theorem \ref{thm:linear}, we have
\[
I_{n,s,\gamma}=e^{t^2\norm{a}_L^2/2}(1+o(1))
\]
and thereby the BvM holds.

\paragraph{Squared $L^2$-norm functionals} For $\Psi(f)=\norm{f}_2^2$, we use the same strategy as in the proof of  Theorem \ref{thm:L2}. For $\alpha \in (\frac{p}{2}, p)$, we have 
\begin{equation}\label{eq:norm}
\norm{f_{0[Z]}^{N,s,\gamma}-f_0}_L^2\leq\norm{f-f_0}_L^2 =o\left(\frac{1}{\sqrt n}\right) 
\end{equation}
here $f_{0[Z]}^{N,s,\gamma}$ denotes the projection of $f_0$ onto deep learning networks with a fixed sparsity and hidden structure $(\gamma, Z)$ where $\abs{\gamma}=s$ and the width equals $N$ (similarly as in (\ref{eq:f_proj})).
The inequality \eqref{eq:norm} holds for all $f$  with a deep structure determined by $(\gamma,Z)$.

The following arguments are similar to the proof of Theorem \ref{thm:L2} but will be conditional on $N$ and $s$. Since
\begin{align*}
&\Pi(\{W_l, b_l\}_{l=1}^{L+1}\mid \Y^{(n)}, A_n,N,s,\gamma ) 
= \Pi(W_{L+1}, b_{L+1}\mid \Y^{(n)}, A_n,N,s,\gamma, Z) d\Pi(Z \mid \Y^{(n)} , A_n, N, s, \gamma)
\end{align*}
we can rewrite $I_{n,s,\gamma}$ as
\begin{align*}
I_{n,s,\gamma} &=\int \left(\int e^{t\sqrt n (\Psi(f)-\hat\Psi)} d\Pi(W_{L+1}, b_{L+1}\mid \Y^{(n)}, A_n,N,s,\gamma, Z) \right) d\Pi(Z \mid \Y^{(n)} , A_n, N, s, \gamma)\\
&=(1+o(1))e^{2t^2\norm{f_0}_L^2}\int  e^{t^2(V^{N,s,\gamma}_Z-V_0)/2+t\sqrt n(\hat \Psi^{N,s,\gamma}_Z-\hat\Psi)} d\Pi(Z \mid \Y^{(n)} , A_n, N, s, \gamma)
\end{align*}
where
\[
\hat \Psi^{N,s,\gamma}_Z=\Psi(f_0)+\frac{1}{\sqrt n}W_n(2f_{0[Z]}^{N,s,\gamma}),  \quad V^{N,s,\gamma}_Z=4\norm{f_{0[Z]}^{N,s,\gamma}}_L^2.
\]
and the term $(1+o(1))$ comes from similar considerations as in (\ref{eq:quad_local_In}).

Now we need to show $I_{n,s,\gamma}\to e^{2t^2\norm{f_0}_L^2}$ for all $N, s$ and  $\gamma$ in the local neighborhood $A_n$. In other words,
\begin{align}
\displaystyle
\sup_{N \leq N_n}\sup_{s \leq s_n} \sup_{\gamma \in \mV^{\p,s}} \int e^{t^2(V^{N,s,\gamma}_Z-V_0)/2+t\sqrt n(\hat \Psi^{N,s,\gamma}_Z-\hat\Psi)} d\Pi(Z \mid \Y^{(n)} , A_n,N,s,\gamma)
=o_P(1).\label{eq:sup_bound1}
\end{align}

Then we can write for $\alpha >p/2$
\begin{align*} 
\sqrt n (\hat\Psi^{N,s,\gamma} -\hat \Psi)&=W_n(f_{0[Z]}^{N,s,\gamma}-f_0)=o_P(1),\\
\abs{V_{N,s,\gamma}-V_0}&=4\abs{\norm{f_{0[Z]}^{N,s,\gamma}}_L^2-\norm{f_0}_L^2} \\
&\lesssim  2\norm{f_0}_L\norm{f_{0[Z]}^{N,s,\gamma}-f_0}_L+ \norm{f_{0[Z]}^{N,s,\gamma}-f_0}_L^2\\
&\lesssim \norm{f_{0[Z]}^{N,s,\gamma}-f_0}_L\leq \xi_n.
\end{align*}
With $\alpha >p/2$,   (\ref{eq:sup_bound1}) is satisfied. Aggregating the sum of $I_{N,s,\gamma}$ over $N,s$ and $\gamma$, we have
\begin{align*}
I_n &=\sum_{N=1}^{N_n} \Pi(N\mid \Y^{(n)}, A_n)\sum_{s=1}^{s_n} \Pi(s\mid \Y^{(n)}, A_n, N)  \sum_{\gamma \in  \mV^{\p,s}} \Pi(\gamma\mid \Y^{(n)}, A_n, N,s)  I_{n,s,\gamma} +o_P(1)\\
&= \sum_{N=1}^{N_n} \Pi(N\mid \Y^{(n)}, A_n)\sum_{s=1}^{s_n} \Pi(s\mid \Y^{(n)}, A_n, N)  \sum_{\gamma \in  \mV^{\p,s}} \Pi(\gamma\mid \Y^{(n)}, A_n, N, s) (1+o(1)) e^{2t^2 \norm{f_0}_L^2 +o_P(1)}+o_P(1).
\end{align*}
As a result, we have $I_n\rightarrow e^{2t^2 \norm{f_0}_L^2}$ for all $t\in \R$ as $n\rightarrow\infty$, which concludes the proof  for the $L^2$-norm functional case. \qedhere

\end{document}